\newcommand{\F}{\mathcal{F}}
\newcommand{\calE}{\mathcal{E}}
\newcommand{\calB}{\mathcal{B}}
\newcommand{\calS}{\mathcal{S}}
\newcommand{\calJ}{\mathcal{J}}
\newcommand{\pS}{\partial \mathcal{S}}
\newcommand{\R}{\mathbb{R}}
\newcommand{\calR}{\mathcal{R}}
\newcommand{\V}{\mathcal{V}}
\newcommand{\W}{\mathcal{W}}
\newcommand{\bbS}{\mathbb{S}}
\newcommand{\bbO}{\mathbb{O}}
\newcommand{\bbQ}{\mathbb{Q}}
\newcommand{\DIV}{\textnormal{div}\,}
\newcommand{\eps}{\varepsilon}
\def\longrightharpoonup{\relbar\joinrel\rightharpoonup}
\def\cv{\stackrel{w}{\longrightharpoonup}}
\newtheorem{Theorem}{Theorem}
\newtheorem{Definition}{Definition}
\newtheorem{Corollary}{Corollary}
\newtheorem{Proposition}{Proposition}
\newtheorem{Lemma}{Lemma}
\newtheorem{Remark}{Remark}
\begin{document}

\date{\today}
\title{On the vanishing rigid body problem in a viscous compressible fluid}
\author{Marco Bravin \footnote{BCAM – Basque Center for Applied Mathematics, Mazarredo 14, E48009 Bilbao,
Basque Country – Spain}, \v{S}\'arka Ne\v{c}asov\'a\footnote{Institute of Mathematics, Czech Academy of Sciences \v{Z}itn\'a 25, 115 67 Praha 1, Czech Republic.} }

\maketitle

\begin{abstract}
In this paper we study the interaction of a small rigid body in a viscous compressible fluid. The system occupies a bounded three dimensional domain. The object it allowed to freely move and its dynamics follows the Newton's laws. We show that as the size of the object converges to zero the system fluid plus rigid body {converges} to the compressible Navier-Stokes system under some mild lower bound on the mass and the inertia momentum. It is a first result of homogenization in the case of fluid-structure interaction in the compressible situation. As a corollary we slightly improved the result on the influence of a vanishing obstacle in a compressible fluid for $\gamma \geq 6$.
\end{abstract}

\section{Introduction}

In this work we study the interaction of a small rigid body with a compressible viscous fluid. The object is allowed to freely move and its dynamic follows the Newton's laws.

These types of problems have both mathematical and physical interests and have been { investigated}  for fluids with different properties in the last years. The first results in this direction studied the case where the rigid body cannot move. This problem takes the name of obstacle problem and the shrinking limit -called
{homogenization}- has been widely studied in the case of viscous fluids. In  \cite{Tar} the author deduced the Darcy's law from the { homogenization} of the Stokes equations in a particular regime. Later Allaire understood that the { homogenization} process for both Stokes and stationary Navier-Stokes systems depends on the size of the holes and deduce the Darcy's law, the Brinkman's law and the no-influence of the holes in three different settings, see \cite{All} and \cite{All:2}. Later on these results were extended for some regimes to the case of unsteady fluid, see \cite{M, FNN, ILL:visc}. 
Moreover, in \cite{ILL}, \cite{Lac}  where the articles deal with an inviscid, incompressible fluid and the last one with a viscous one. 

These results were then extended to the case of a rigid body which is allowed to move. In particular in \cite{GLS} and \cite{G:S}, they study the problem in the case the fluid is two dimensional, non-viscous and incompressible and in \cite{La:Ta}, \cite{Je:1} and \cite{Je:2} where they consider the case of viscous incompressible fluid in both two and three dimensions.

For compressible viscous fluid, again when a rigid body is not moving,  the homogenization for perforate domains was studied in  \cite{Mas}, \cite {FT}, \cite{Fei:Lu}  and \cite{Lu:Schw}. In the first two papers the size of the holes and the mutual distance were comparable and a Darcy's law was derived. Recently this result was extended in \cite{KHS}. In the last two works the authors consider the case of tiny holes and they recover respectively the stationary and non-stationary compressible Navier Stokes system.  An extension of the homogenization for perforate domain in the case of steady full compressible system where authors again recover the full system, see \cite{LP}. Let us mention that homogenization problem in the compressible case was proved only in three dimensional case. 


In this paper we consider the case of a shrinking object in a compressible viscous { barotropic} fluid with pressure $ p(\rho) = a \rho^{\gamma} $, { $a$ is a positive constant, for simplicity we consider $a=1$,  an assumption on the constant $\gamma$ will be precised later.}  In particular we show that in the limit the presence of the small object does not influence the dynamics. This is a first result in the case of moving shrinking object in the compressible barotropic fluid.

The idea of the proof is to use a cut off  { function} around the rigid body and to pass to the limit in the weak { formulation} of the equations. In three dimension the gradient of the cut-off is bounded in $ L^p $ for $ 1 \leq p \leq 3 $, so we need that the pressure is $ L^p $, for $ p \geq 3\gamma/2 $ which gives the restriction $ \gamma \geq 6 $, to pass to the limit in the pressure term. { Let us mention that the existence of weak solution is valid for $\gamma >3/2$.}  

We impose a second restriction on the mass and the { inertia} matrix of the object of size $ \varepsilon $. This condition allows us to have an a priori estimate of the velocity of the solid from the energy estimate. The condition reads
\begin{equation*}
m_{\eps} \eps^{-\frac{3\gamma -4}{\gamma} }  \longrightarrow \infty  \quad \text{ and } \quad \left(\inf_{\xi \in S^2} \xi \cdot \calJ_{\calS_{0,\eps}} \xi \right)\eps^{-2 - \frac{3\gamma -4}{\gamma} } \longrightarrow + \infty \quad \text{ as } \eps \longrightarrow 0.
\end{equation*}

This condition will naturally appear when we prove the improved regularity of the pressure which is the main difficulty and difference with respect to the case of incompressible fluids  where {\it no restrictions are need on the mass and the inertia matrix of the object}.

In the next section we introduce the problem at a mathematical level.

\subsection{Formulation of problem}

In this paper we study the interaction between a compressible viscous fluid and a small rigid body which is allowed to freely move. In particular we will show that as the size of the object tends to zero the presence of the rigid body is negligible. Let start by recalling the equations satisfied by the fluid plus rigid body system. 

\begin{align}
\partial_t \rho_{\F} + \DIV( \rho_{\F}u_{\F}) & \, = 0 \quad \quad  \, \, \, \, \text{ for } x  \in \F(t),  \nonumber \\
\partial_t( \rho_{\F} u_{\F}) + \DIV(\rho_{\F} u \otimes u_{\F} ) - \DIV\bbS(\nabla u_{\F}) + \nabla p( \rho_{\F}) & \, =  0   \quad \quad \, \, \, \, \text{ for } x \in \F(t), \nonumber \\
u_{\F} & \, =  0 \quad \quad \, \, \, \,  \text{ for } x \in \partial \Omega, \nonumber \\
u_{\F} & \, = u_{\calS} \quad \quad  \text{ for } x \in \pS(t), \nonumber \\
m \ell'(t) = - \oint_{\pS(t)} \left(\bbS(\nabla u_{F}) - p(\rho_{\F})\mathbb{I}  \right)& nds,  &&  \label{equ:CNS:RB} \\
\calJ(t) \partial_t \omega(t) = \calJ(t) \omega(t) \times \omega(t) - \oint_{\pS(t)} (x-h(t))\times &\left(\bbS(\nabla u_{F}) - p(\rho_{\F})\mathbb{I}  \right) n ds, \nonumber \\
(\rho_{\F}u_{\F})(0) = q_{\F,0}, \quad \rho_{\F}(0) & \, = \rho_{F,0}  \quad \quad \, \, \, \,  \text{ for } x \in \F_{0},\nonumber \\
\ell(0) = \ell_0, \quad \omega(0) & \,= \omega_0. \nonumber
\end{align}  
In the above system $ \calS(t) \subset \Omega $ is the position of the solid at time $ t $ and $ \F(t) = \Omega \setminus \calS(t)  $ is the part of the domain occupied by the fluid. The scalar quantity $ \rho_{\F} $ describes the pressure of the fluid. The vector field $ u_{\F} $ is the velocity of the fluid. The $ 3 \times 3 $ matrix 
\begin{equation*}
\mathbb{S}(\nabla u_{\F})-p(\rho_{F})\mathbb{I} = \mu D(u_{\F}) + (\lambda+\mu) \DIV(u_{\F}) \mathbb{I} - \rho_{\F}^{\gamma}\mathbb{I} = \mu \frac{\nabla u_{\F}+\nabla u_{\F}^T}{2} + (\lambda+\mu) \text{tr}(\nabla u_{\F})\mathbb{I} - \rho_{\F}^{\gamma}\mathbb{I} 
\end{equation*}
is the stress tensor where $ \mu > 0 $ is the viscous { coefficient} and $ 3 \lambda + 2 \mu \geq 0 $, { the coefficient} $ \gamma \geq 6 $. The vector $ h(t) $ is the position of the center of mass of the solid, the velocity of the center of mass $ h'(t) = \ell $, the angular velocity is denoted by $ \omega $ and $ u_{\calS} = \ell + \omega \times (x-h(t))$ is the solid velocity. The mass of the solid $ m $, the position of the center of mass $ h(t)$ and  the inertia matrix of the solid $ \calS(t) $ are defined as 
\begin{equation*}
\begin{array}{l}
m = \int_{{\calS}(t)} \rho_{\calS} \ dx, \\
h(t) = \frac{1}{m} \int_{{\calS}(t)} \rho_{{\calS}} \  x \ dx, \\
\calJ(t)= \int_ {{\calS}(t)} \rho_{{\calS}}  \big[ |x-h(t)|^2\mathbb{I} - (x-h(t)) \otimes (x-h(t)) \big] \ dx.
\end{array}
\end{equation*}
(by $\rho_{\calS} >  0 $ we denote the density of the rigid body.)
The normal vector exiting from the fluid domain is denoted by $ n $. Finally $ \F_{0} = \Omega \setminus \calS_{0}$, $ q_{\F,0} $, $ \rho_{\F, 0} $, $ \ell_{0} $ and $ \omega_{0} $ are the initial data.

The goal of this paper is to show that as the size of the object $ \calS $ tends to zero, the associated weak solutions converge up to subsequence in a weak sense to a couple $ (\rho,  u ) $ that satisfies the compressible Navier Stokes equations in all $ \Omega $, which read as
\begin{align}
\partial_t \rho + \DIV( \rho u) = & \, 0 \quad && \text{ for } x \in \Omega, \nonumber \\
\partial_t(\rho u) + \DIV( \rho u \otimes u ) - \DIV\bbS(\nabla u) + \nabla p(\rho)  = & \, 0  \quad && \text{ for } x \in \Omega, \label{equ:CNS}  \\
u = & \, 0 \quad && \text{ for } x \in \partial \Omega. \nonumber
\end{align}

The paper is structured as follow. In the next section we recall an appropriate definitions of weak solutions associated respectivelly with the systems \eqref{equ:CNS:RB} and \eqref{equ:CNS}. In the next one we state the main result and we discuss the hypothesis. Then we introduce a Bogovski\u{\i} operator that follows the rigid body, we show the pressure estimates which is the central part of the article and we conclude by briefly explaining how to pass to the limit in a weak formulation.

\subsection{Definition of weak solutions}

We start by recalling the definition of variational solution for the system \eqref{equ:CNS:RB} from \cite{F:body:comp}, where an existence result was proven.  Actually we decide to slightly change this definition by incorporating the compatibility condition between the rigid motion and the velocity field directly in the spaces as in \cite{GH:exi}, \cite{MB1}, { \cite{KrNePi_2}.} 
 
Let $ \Omega $ { be } an open, bounded, connected subset of $\R^{3}$ with regular boundary. The unknowns of the problem are three namely $ \calS(t) \subset \Omega $ the position of the solid at time $ t $, $ \rho $ the density which describes the density of the fluid in $ \F(t)$, of the rigid body in $ \calS(t) $ and it is extended by zero in $ \R^{3} \setminus\bar{\Omega} $ and $ u $ the velocity field that has to be compatible with the solid velocity in $ \calS(t) $.  

Regarding the pressure law we consider the isentropic pressure-density $ p(\rho) =  \rho^{\gamma} $ and $ \gamma > 3/2 $. Moreover we denote by $ P(\rho) =  \rho^{\gamma}/ (\gamma -1) $. For the  viscosity coefficients we assume $ \mu > 0 $ and $ 3\lambda + 2\mu \geq  0$. Finally we denote the space of the rigid vector field by $ \calR = \{ w: \R^{3} \to \R^{3} $ such that  $ w(x) = l + \omega \times x $ for some $ l$, $ \omega \in \R^{3} \}$. In the following for a measurable set $ A $ we say that $ w: A \to \R^{3} $ is a rigid vector field on $ A $ if there exists $ \tilde{w} \in \calR $ such that $w = \tilde{w}|_{A} $ almost everywhere and with an abuse of notation we write $ w\in \calR$.     
  
The initial data are the position $ \calS_0  \subset \Omega $ of the solid where we assume $ \calS_{0} $ to be open, connected, simply connected, path-connected, measurable, with non empty interior and with Lipschitz boundary. The initial density $ \rho_0 \geq 0  $, which is strictly positive on $ \calS_{0} $. Finally we prescribed the initial momentum $ q_{0} $ such that $  q $ is a.e. identically zero on $ \{ x \in \omega $ such that $ \rho_{0} = 0  \} $ and the restriction of $ q_{0}/\rho_{0}$ on $ \calS_{0}$ is a rigid velocity field on $ \calS_{0}$.     
  
\begin{Definition}
\label{DEF:CNS:RB}
Let $ (\calS_{0}, \rho_{0}, q_{0}) $ an initial data such that $ P(\rho_{0}) \in L^{1}(\Omega) $ and $ |q_0|^2/\rho_0 \in L^{1}(\Omega) $. Then a triple $ (\calS, \rho, u) $ is a weak solution of \eqref{equ:CNS:RB} with initial datum $ (\calS_{0}, \rho_0, q_0) $ for some $ T > 0$ if 
\begin{itemize}


{ \item $\calS(t) \subset \Omega$ is a bounded domain of $\mathbb{R}^3$ for all $t\in [0,T)$ such that 
\begin{equation*}
\chi_{\calS}(t,x) = \mathds{1}_{\calS(t)}(x) \in L^{\infty}((0,T) \times \Omega). 
\end{equation*}}

\item $ \rho \in L^{\infty}(0,T; L^{1}(\Omega)) $ such that $ \rho \geq 0 $ and $ P(\rho) \in L^{\infty}(0,T; L^{1}(\Omega)) $.

\item $ u \in \V = \{ u \in L^{2}(0,T;W^{1,2}_{0}(\Omega) ) $ such that $  v|_{\calS(t)} \in \calR    \}$.

\item $ (\calS, \rho, u) $ satisfy the transport equation $ \partial_{t}\rho + \DIV(\rho u) =  0 $ in both a distributional sense in $ [0,T)\times \R^{3} $ and in a renormalized sense where we extend $ \rho $ and $ u $ by zero in the exterior of $ [0,T] \times \Omega $.
{ \item The transport of $\calS$ by the rigid vector field $u_{\calS}$ holds:

\begin{equation*}
\int_0^T\int_{\calS(t)} \partial_t \phi + u_{\calS}\cdot \nabla \phi + \int_{\calS_0} \phi(0,.) = 0,
\end{equation*}
for any $ \phi \in C^{\infty}_c([0,T)\times \mathbb{R}^3)$.}
\item The momentum equation is satisfied in the weak sense
\begin{align*}
\int_{\Omega} q_0 \varphi + \int_{0}^{T} \int_{\Omega} (\rho u)\cdot \partial_t \varphi + [\rho u \otimes u]: D \varphi + p \DIV \varphi = \int_{0}^{T} \int_{\Omega} \bbS u : D \varphi
\end{align*}
for any $ \varphi \in \W $ with
\begin{align*}
\W = \Big\{ \varphi \in C^{\infty}_{c}([0,T)\times \Omega)  \text{ such that }  & \text{for some }\calS_{op} \text{ open neighbourhood of } \bigcup_{t\in [0,T]} \{ t\} \times \overline{\calS(t)} \\ & \text{ it holds } \varphi|_{\calS_{op}} \in C^{\infty}([0,T]; \calR)    \Big\}
\end{align*}

\item For a.e. $ \tau \in [0,T] $ the following energy equality holds
\begin{equation}
\label{ene:est:CNS:RB}
\int_{\Omega} \frac{1}{2}  \rho |u|^2(\tau,.) + P(\rho(\tau,.)) \, dx + \int_{0}^{\tau}\int_{\Omega} \mu | Du|^2+ \lambda | \DIV u |^2 \, dx dt \leq  \int_{\Omega} \frac{1}{2} \frac{|q_0|^2}{\rho_0} +  P(\rho_0) dx.
\end{equation}

\end{itemize}

\end{Definition}

{ \begin{Remark}
Let us mention in work \cite{F:body:comp}
the motion of the rigid body is described through a family of isometries of $\R^3$ by
$$
\eta [t,s]:  \R^3 \to \R^3,\ \ {\overline \calS}(t)= \eta[t,s]({\overline \calS}(s))\ \ \mbox{for}\ 0\leq s\leq t\leq T,
$$
or equivalently
$$
\eta[t,s]=\eta[t,0]\left(\eta[s,0]\right)^{-1},
$$
where the mapping $\eta[t,0]$ satisfies
\[
\eta[t,0](x)\equiv \eta[t](x) = X(t) + \bbO(t) x,\ \ \bbO(t)\in SO(3).
\]

We say that the velocity $u$ is compatible with the family
$\{\calS,\eta\},$  if the
function $t\mapsto \eta[t](x)$ is absolutely continuous
on $[0,T]$ for any $x \in \R^3$ and if
\begin{equation*}
\left( \frac{\partial}{\partial t}\ \eta[t] \right)
\left(\left(\eta[t]\right)^{-1}(x)\right)= u(t,x)\
 \ \ \mbox{for}\ x\in {\overline {\calS}}(t)\ \mbox{and a.e.}\ t\in (0,T).
 \end{equation*}
In other words if
$$
u(t,x)=u_{\calS}(t,x)\equiv \ell(t)+{\bbQ}(t)(x-X(t))\ \ \mbox{for}\ x\in {\overline {\calS}}(t)\ \mbox{and a.e.}\ t\in (0,T),$$

 where
\begin{equation*}
    \ell(t)= \frac{d}{d t} X(t),\ \ \bbQ(t) = \left(\frac{d}{d t}\bbO(t)\right)\left(\bbO(t)\right)^{-1}
\end{equation*}
for a.e. $t \in (0,T)$. Note that the matrix $\bbQ(t)$ is skew-symmetric, so the term $\bbQ(x-X)$ can be written as $\omega \times (x-X)$ for a uniquely determined vector $\omega$.

In the definition of weak solution it is required that the velocity $u$ is compatible with $\{\overline{\calS},\eta\}$ and the functions $\eta[t]: \R^3 \mapsto \R^3$ are isometries
\end{Remark}


Let now recall the definiton of variation solution of the system \eqref{equ:CNS} from \cite{NovS}. As before we assume that the initial density $ \rho_0 \geq 0  $ and the initial momentum $ q_{0} $ is such that $  q $ is a.e. identically zero on $ \{ x \in \omega $ such that $ \rho_{0} = 0  \} $.

\begin{Definition}
\label{DEF:CNS}

Let $ (\rho_0, q_0)$ be an initial data such that $ P(\rho_{0}) \in L^{1}(\Omega) $ and $ |q_0|^2/\rho_0 \in L^{1}(\Omega) $. Then a couple $ ( \rho, u) $ is a weak solution of \eqref{equ:CNS} with initial datum $ (\rho_0, q_0) $ for some $ T > 0$ if

\begin{itemize}

\item $ \rho \in L^{\infty}(0,T; L^{1}(\Omega)) $ such that $ \rho \geq 0 $ and $ P(\rho) \in L^{\infty}(0,T; L^{1}(\Omega)) $.

\item $ u \in L^{2}(0,T;W^{1,2}_{0}(\Omega)) $.

\item $ ( \rho, u) $ satisfy the transport equation $ \partial_{t}\rho + \DIV(\rho u) =  0 $ in both a distributional sense in $ [0,T)\times \R^{3} $ and in a renormalized sense where we extend $ \rho $ and $ u $ by zero in the exterior of $ [0,T] \times \Omega $.

\item the momentum equation is satisfied in the weak sense
\begin{align*}
\int_{\Omega} q_0 \varphi + \int_{0}^{T} \int_{\Omega} (\rho u)\cdot \partial_t \varphi + [\rho u \otimes u]: D \varphi + p \DIV \varphi = \int_{0}^{T} \int_{\Omega} \bbS u : D \varphi,
\end{align*}
for any $ \varphi \in C^{\infty}([0,T)\times \Omega)$.

\item for a.e. $ \tau \in [0,T] $ the following energy equality holds
\begin{equation*}
\int_{\Omega} \frac{1}{2}  \rho |u|^2(\tau,.) + P(\rho(\tau,.)) \, dx + \int_{0}^{\tau}\int_{\Omega} \mu | Du|^2+ \lambda | \DIV u |^2 \, dx dt \leq  \int_{\Omega} \frac{1}{2} \frac{|q_0|^2}{\rho_0} +  P(\rho_0) dx.
\end{equation*}

\end{itemize}

\end{Definition} 

In the next section we present the main result of the paper.

\section{Main result and discussion}

The result of this paper read as follow. Let $ \calS_{0} \subset \Omega $ { be } the position of the initial solid, let $ \bar{x} $ { be }such that $ \calS_{0} \subset B_{1}(\bar{x}) $ and let $ \calS_{0,\eps} =  \{ x $ such that $ \bar{x} + (x-\bar{x})/\eps \in \calS_{0} \} $  { be } a sequence of position of initial solid such that $\calS_{0,\eps} \subset \Omega $ and $ \tilde{\rho}_{\eps} > 0 $ their associated density. 
In the following we consider the case where the mass and its angular momentum satisfy the assumptions
\begin{equation}\label{size}
m_{\eps} \eps^{-\frac{3 \gamma-4}{\gamma}} \longrightarrow + \infty \quad \text{ and } \quad \left( \inf_{\xi \in S^2} \xi \cdot \calJ_{\calS_{0,\eps}} \xi \right) \eps ^{-2 - \frac{3 \gamma-4}{\gamma}} \longrightarrow + \infty \quad \text{ as } \eps \longrightarrow 0.
\end{equation}
In particular the mass of the object can converge to zero.

\begin{Theorem}
\label{Theo:main}

Let $ \gamma \geq 6 $. Let $ (\calS_{0,\eps}, \rho_{0,\eps}, q_{0, \eps}) $ { be } a sequence of initial data such that they satisfy  
 
\begin{itemize}
 
\item $ \rho_{0,\eps}|_{\calS_{0,\eps}} = \tilde{\rho}_{\eps} $ and $ \rho_{0,\eps}|_{\F_{0,\eps}} \to \rho_0 $ in $ L^{\gamma}(\Omega) $, where we extend by zero in $ \calS_{0,\eps} $.  

\item $ q_{0,\eps}^2/\rho_{0,\eps} \to q_{0}^2/\rho_{0} $ in $ L^{1}(\Omega) $.
 
\end{itemize}

Let $ (\calS_{\eps}, \rho_{\eps}, u_{\eps}) $  { be } solutions of \eqref{equ:CNS:RB} in the sense of Definition \ref{DEF:CNS:RB} associated with the initial data $ (\calS_{0,\eps}, \rho_{0,\eps}, q_{0, \eps})$. Then up to subsequence there exists $ (\rho, u )$ such that
\begin{equation*}
\rho_{\F,\eps} \longrightarrow \rho \text{ in } C_{w}(0,T; L^{\gamma}(\Omega))  \quad \text{ and } \quad u_{\eps} \cv u \text{ in } L^{2}(0,T; W^{1,2}_{0}(\Omega)). 
\end{equation*} 
Moreover the couple $ (\rho, u ) $ satisfy \eqref{equ:CNS} with initial data $ (\rho_{0}, q_{0}) $ in the sense of Definition \ref{DEF:CNS}.

\end{Theorem}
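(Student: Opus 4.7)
The plan is to follow the Lions--Feireisl weak compactness strategy for the compressible Navier--Stokes system, combined with a shrinking cut-off around the rigid body. First, from the energy inequality \eqref{ene:est:CNS:RB} I would extract the standard a priori bounds: $\sqrt{\rho_\eps}u_\eps$ is bounded in $L^\infty(0,T;L^2(\Omega))$, $P(\rho_\eps)$ in $L^\infty(0,T;L^1(\Omega))$ and $\nabla u_\eps$ in $L^2(0,T;L^2(\Omega))$. Since $u_\eps$ is rigid on $\calS_\eps(t)$, the solid kinetic energy equals $m_\eps |\ell_\eps|^2 + \calJ_{\calS_\eps}\omega_\eps\cdot\omega_\eps$, and the lower bounds \eqref{size} on $m_\eps$ and $\calJ_{\calS_\eps}$ translate directly into quantitative bounds on $\ell_\eps$ and $\omega_\eps$. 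Weak-$\ast$ compactness then yields limits $\rho$ and $u\in L^2(0,T;W^{1,2}_0(\Omega))$; since $|\calS_\eps(t)|=\O(\eps^3)\to 0$, no rigidity constraint survives on $u$.

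The technical core is the improved pressure estimate. Following Lions--Feireisl, one would test the momentum equation against $\calB_\eps(\rho_\eps^\theta - \langle\rho_\eps^\theta\rangle)$, where $\calB_\eps$ is a Bogovski\u{\i} operator defined on the perforated domain $\F_\eps(t)$ and $\theta = 2\gamma/3 - 1$. The construction of $\calB_\eps$ is the step that must accommodate the moving rigid body: one needs operator norms whose dependence on $\eps$ is balanced against the lower bounds on the solid mass and inertia coming from \eqref{size}. The conclusion is a uniform bound $\rho_\eps\in L^{\gamma+\theta}((0,T)\times\Omega)$. Since $\gamma\geq 6$, this gives $\gamma+\theta\geq 3\gamma/2$, which is exactly the threshold needed to pair $p(\rho_\eps)$ with the gradient of a cut-off bounded in $L^3$.

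Passing to the limit in the weak formulation is then a matter of choosing the right test function. For $\varphi\in C^\infty_c([0,T)\times\Omega)$ I would use $\eta_\eps\varphi$, where $\eta_\eps$ is a cut-off equal to $1$ outside an $\O(\eps)$--neighbourhood of $\calS_\eps(t)$ and vanishing on $\calS_\eps(t)$; this function belongs to $\W$ trivially since it is zero on the solid. The remainders produced by $\nabla\eta_\eps$ are supported on a set of volume $\O(\eps^3)$: the convective and viscous contributions vanish easily by Hölder, and the pressure contribution vanishes precisely thanks to the improved integrability of the previous step. The density transport and its renormalized form are preserved in the limit by Feireisl's compactness lemma, and the extension of $\rho_\eps$ by $\tilde\rho_\eps$ inside $\calS_\eps$ disappears in the limit because $|\calS_\eps|\to 0$ while the first condition in \eqref{size} prevents a concentration of $\tilde\rho_\eps^\gamma \eps^3$.

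The final step is the identification of the weak limit of $p(\rho_\eps)$. On any compact $K\subset\Omega$ the solid eventually leaves $K$, so one can apply the local effective viscous flux identity of Lions--Feireisl and the classical monotonicity argument to conclude $\overline{\rho^\gamma}=\rho^\gamma$ on $K$, hence on $\Omega$. The central difficulty of the whole programme is the construction of the Bogovski\u{\i} operator $\calB_\eps$ adapted to the time-dependent perforated domain together with norm estimates compatible with \eqref{size}; once this is accomplished, the remainder of the proof follows from a careful implementation of Feireisl's theory combined with the cut-off technique familiar from the incompressible obstacle literature.
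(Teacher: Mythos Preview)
Your overall architecture matches the paper's: uniform energy bounds, a Bogovski\u{\i} test for higher pressure integrability, a cut-off to excise the body from the weak momentum equation, and the effective viscous flux identity to close. Two steps, however, do not work as you describe them.

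\emph{The cut-off.} A standard rescaled cut-off supported in an $O(\eps)$-annulus has $\|\nabla\eta_\eps\|_{L^3}\sim C$, a constant that does \emph{not} tend to zero. Under \eqref{size} the achievable exponent in the pressure estimate is only $\theta=\gamma/2$ (your $\theta=2\gamma/3-1$ coincides with this at $\gamma=6$, but for $\gamma>6$ it requires the strictly stronger hypothesis \eqref{extra:hyp} of Proposition~\ref{PROP:3}); hence $\rho_\eps^\gamma\in L^{3/2}$ and no better. The pressure remainder $\int\rho_\eps^\gamma\,\nabla\eta_\eps\cdot\varphi$ is then controlled by $\|\rho_\eps^\gamma\|_{L^{3/2}}\|\nabla\eta_\eps\|_{L^3}$, which is merely $O(1)$: the small volume of the support of $\nabla\eta_\eps$ does not help, since nothing prevents the pressure from concentrating near the body. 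The paper resolves this with a \emph{logarithmic} cut-off, $1-\eta_\eps(x)=(\log|x|-\log(\alpha_\eps\eps))/(\log\eps-\log(\alpha_\eps\eps))$ on $\eps\le|x|\le\alpha_\eps\eps$, with $\alpha_\eps\to\infty$, $\alpha_\eps\eps\to0$ chosen compatibly with \eqref{size}; then $\|\nabla\eta_\eps\|_{L^3}^3=2\pi^2/(\log\alpha_\eps)^2\to0$, and the endpoint $\gamma=6$ goes through. Incidentally, the place where \eqref{size} actually bites in the Bogovski\u{\i} argument is the time derivative $\partial_t\calB_\eps^t$: because the operator follows the body, this produces commutators with $u_{S,\eps}$ of size $(|\ell_\eps|+\eps|\omega_\eps|)\,\eps^{(3\gamma-4)/(2\gamma)}$, and \eqref{size} is precisely what forces this product to vanish.

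\emph{The identification of the pressure.} Your claim that ``on any compact $K\subset\Omega$ the solid eventually leaves $K$'' is false: nothing controls the trajectory of $h_\eps(t)$, and the bodies may well sit on a fixed point of $K$ for every $\eps$ and every $t$. You therefore cannot localise the effective viscous flux identity away from the body. The paper instead carries the same cut-off into the flux computation, testing the momentum equation with $\psi\,\eta_\eps\,\nabla\Delta^{-1}[\rho_\eps]$, and checks that every extra term generated by $\nabla\eta_\eps$ vanishes in the limit---again relying on $\|\nabla\eta_\eps\|_{L^3}\to0$.
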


First of all recall that existence of {local in time weak solution were proved in \cite{DE} and the global existence of weak solution in \cite{F:body:comp}}. The difficulty of this result is to perform improved estimates of the pressure due to the loss of an uniform estimates of the velocity of the rigid body. In particular the dynamics of the rigid body in the limit remains unknown. 

Regarding the hypotesis, to show this result we use a cut-off around the solid and to pass to the limit in the pressure term we need that $ \rho_{\F } $ is in $ L^{q} $ for $ q \geq 3/2 \gamma $ and for this we need the restriction $ \gamma \geq 6 $. Regarding the mass, for incompressible fluid  see \cite{Je:1} and \cite{Je:2} the energy estimates were enought to pass to the limit. Here we need to show the improved regularity of the pressure and in contrast with the case of the fluid alone, where the less integrable term was the convective one $ \rho_{\eps} u_{\eps} \otimes u_{\eps} $, here the worst term is the one coming from the time derivative and can be bounded by a constant times 
\begin{equation*}
\eps^{\frac{5\gamma-\theta-6}{2(\gamma+\theta)}} (|\ell_{\eps}| + \eps|\omega_{\eps}| ),
\end{equation*}
see Proposition \ref{PROP:3}. This shows that to have a control of the $ L^{\gamma +\theta} $ for bigger $ \theta $, we need an better control on the solid velicity which is associated throw the energy estimates to a better control on the  mass $ m_{\eps} $ and the inertia matrix $ \calJ_{\eps} $. For these reasons in our approch appears the theshold 
\begin{equation*}
\frac{3\gamma-4}{\gamma},
\end{equation*} 
which is the one to get $ \rho_{\eps} $ uniformly bounded in $ L^{3\gamma/2}$.


\begin{Remark}
It is possible to consider some external forces of the type $ \rho_{\eps} f_{\eps} + g_{\eps }$ in the system \eqref{equ:CNS:RB}. More precisely we can add $ \rho_{\F} f + g $  in the right hand side of the momentum equation (the second equation of \eqref{equ:CNS:RB}), $ \int_{ \calS } \rho_{\calS} f + g  $  and  $ \int_{ \calS }  (x-h(t))\times(\rho_{\calS} f + g)  $ on the right hand side of the Newton's laws associated with respectively  the evolution of the center of mass and the angular rotation (the fifth and the sixth equation of \eqref{equ:CNS:RB}). 

Then Theorem \ref{Theo:main}, Proposition \ref{PROP:1} and \ref{PROP:3} holds also in the presence of external forces if we assume 
\begin{gather*}
f_{\eps} \longrightarrow f \text{ in } L^{2}\left(0,T;L^{\frac{6\gamma}{5\gamma-6}}(\Omega)\right)  \quad \text{ and } \quad  g_{\eps} \longrightarrow g \text{ in } L^{2}\left(0,T;L^{\frac{6}{5}}(\Omega)\right).
\end{gather*}
Note that the gravitation force can be considered, in fact it is enough to choose $ f_{\eps} = \textbf{g} $ the gravitational acceleration.

\end{Remark}

Let us conclude this section with a Corollary. In the case the rigid body is not allowed to move the study of its influence as its size tends to zero is called vanishing obstacle problem. With the use of appropriate cut-off presented in Section \ref{sec:5}, we are able to show that the presence of the small hole does not affect the dynamics of the fluid also for $ \gamma = 6 $. (In the previous works they restict to the case $ \gamma >  6 $). Let us state rigorously the corollary.

\begin{Corollary}
Let $ \gamma \geq 6 $. Let $ \F_{\eps} = \calS_{0,\eps} $ and let $(\rho_{0,\eps}, q_{0,\eps}) $ be a sequence of initial data such that they satisfy

\begin{itemize}
 
\item $ \rho_{0,\eps} \to \rho_0 $ in $ L^{\gamma}(\Omega) $, where we extend by zero in $ \calS_{0,\eps} $.  

\item $ q_{0,\eps}^2/\rho_{0,\eps} \to q_{0}^2/\rho_{0} $ in $ L^{1}(\Omega) $.
 
\end{itemize}

Let $ (\rho_{\eps}, u_{\eps}) $  be solutions of \eqref{equ:CNS} in the domain $ \F_{\eps} $ in the sense of Definition \ref{DEF:CNS} associated with the initial data $ (\rho_{0,\eps}, q_{0, \eps})$. Then up to subsequence there exists $ (\rho, u )$ such that
\begin{equation*}
\rho_{\eps} \longrightarrow \rho \text{ in } C_{w}(0,T; L^{\gamma}(\Omega))  \quad \text{ and } \quad u_{\eps} \cv u \text{ in } L^{2}(0,T; W^{1,2}_{0}(\Omega)). 
\end{equation*} 
Moreover the couple $ (\rho, u ) $ satisfy \eqref{equ:CNS} with initial data $ (\rho_{0}, q_{0}) $ in the sense of Definition \ref{DEF:CNS}.

\end{Corollary}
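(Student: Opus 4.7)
My strategy is to follow the proof of Theorem \ref{Theo:main}, which simplifies substantially in the obstacle setting where the rigid body is frozen ($\ell_\eps\equiv 0$, $\omega_\eps\equiv 0$), so that the weight conditions \eqref{size} become vacuous. First I would extend $u_\eps$ by zero inside $\calS_{0,\eps}$; this extension belongs to $L^2(0,T;W^{1,2}_0(\Omega))$ thanks to no-slip on $\partial\calS_{0,\eps}$. Extending $\rho_\eps$ by zero as well, the energy inequality for the fluid on $\F_\eps$ yields the usual uniform bounds on $\sqrt{\rho_\eps}\,u_\eps$, $\nabla u_\eps$, and $\rho_\eps^\gamma$ in their natural spaces.

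Next I would prove an improved pressure estimate $\rho_\eps \in L^{\gamma+\theta}((0,T)\times\Omega)$ uniformly, for some $\theta>0$; this is the analogue of Proposition \ref{PROP:3} in the static setting. Because the solid is fixed, the delicate contributions involving the time derivative of the rigid-motion projection are absent, and the estimate follows by testing the momentum equation against a Bogovski\u\i{}-type corrector $\nabla\Delta^{-1}(\rho_\eps^\theta-\langle\rho_\eps^\theta\rangle)$ truncated by the cut-off $\psi_\eps$ from Section \ref{sec:5}, which vanishes in a neighbourhood of $\calS_{0,\eps}$, equals $1$ outside a slightly larger one, and is uniformly bounded in $W^{1,3}(\Omega)$. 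For $\gamma\geq 6$ one can arrange $\gamma+\theta\geq 3\gamma/2$, so $\rho_\eps^\gamma$ is bounded in $L^{3/2+\delta}$ for some $\delta>0$.

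To pass to the limit, I would test the weak formulation of the momentum equation against $\psi_\eps\varphi$ for $\varphi\in C^\infty_c([0,T)\times\Omega)$. The extra terms proportional to $\nabla\psi_\eps$ must vanish. The key observation is that $\nabla\psi_\eps$ is supported in a tubular neighbourhood of $\calS_{0,\eps}$ of volume $O(\eps^3)$ and is bounded in $L^3$, so it converges to $0$ strongly in every $L^q$ with $q<3$. Combined with the bound $\rho_\eps^\gamma \in L^{3/2+\delta}$ from the improved pressure estimate and with the standard Feireisl--Lions compactness, the pressure contribution $\int p(\rho_\eps)\,\varphi\cdot\nabla\psi_\eps$ vanishes by weak--strong duality; the convective and viscous contributions are handled analogously using the energy bounds. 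Standard arguments then identify the limit pressure and pass to the limit in the continuity equation.

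\textbf{Main obstacle.} The critical step is the endpoint analysis at $\gamma=6$: both the pressure integrability $L^{3/2}$ and the capacity estimate $\nabla\psi_\eps\in L^3$ are exactly borderline, and one needs the \emph{strict} improvement $\theta>0$ in the pressure bound to create the margin required for the weak--strong convergence argument. This is precisely what gains the endpoint $\gamma=6$ compared to previous obstacle results, which required $\gamma>6$.
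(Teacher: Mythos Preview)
Your plan has a genuine gap at the endpoint $\gamma=6$, which is precisely the case the Corollary is meant to add. You propose to gain the endpoint by obtaining a \emph{strict} pressure improvement $\rho_\eps^\gamma\in L^{3/2+\delta}$ and then pairing it with a cut-off whose gradient is merely bounded in $L^3$ and tends to zero in $L^q$ for $q<3$. But the test-function argument you invoke yields at best $\rho_\eps\in L^{\gamma+\theta}$ with $\theta\le 2\gamma/3-1$: this constraint comes from the convective term (the term $J_3$ in the proof of Proposition~\ref{PROP:1}, and likewise $B_4$, $n_3^1$) and is completely independent of whether the obstacle moves. At $\gamma=6$ one has $2\gamma/3-1=3=\gamma/2$, so the best available is exactly $\rho_\eps^\gamma\in L^{3/2}$, with no $\delta>0$ to spare. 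Your weak--strong pairing for $\int\rho_\eps^\gamma\,\varphi\cdot\nabla\psi_\eps$ then sits at the double borderline $L^{3/2}\times L^3$ and does not converge. This is exactly why the earlier obstacle results required $\gamma>6$.

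The paper takes a different route. You actually cite the cut-off of Section~\ref{sec:5}, but you record only that it is ``uniformly bounded in $W^{1,3}$'' and miss its decisive property: the logarithmic profile gives
\[
\|\nabla\eta_{\eps,\alpha_\eps}\|_{L^3(\R^3)}^3=\frac{2\pi^2}{(\log\alpha_\eps)^2}\longrightarrow 0
\]
whenever $\alpha_\eps\to\infty$ (with $\eps\alpha_\eps\to 0$). Since the obstacle is fixed, one may choose $\alpha_\eps$ freely, e.g.\ $\alpha_\eps=\eps^{-1/2}$. Then the borderline pressure bound $\rho_\eps^\gamma\in L^{3/2}$ from Proposition~\ref{PROP:1} (whose proof simplifies in the static case, all the $u_{\calS,\eps}$-terms vanishing) already suffices:
\[
\left|\int_0^T\!\!\int_\Omega \rho_\eps^\gamma\,\nabla\eta_{\eps,\alpha_\eps}\cdot\varphi\right|
\le \|\rho_\eps^\gamma\|_{L^{3/2}}\,\|\nabla\eta_{\eps,\alpha_\eps}\|_{L^{3}}\,\|\varphi\|_{L^\infty}
\le \frac{C}{(\log\alpha_\eps)^{2/3}}\longrightarrow 0,
\]
and similarly for the convective and viscous remainders. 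In short, the endpoint $\gamma=6$ is won by the cut-off going to zero in $L^3$, not by extra pressure integrability; your proposal reverses this and therefore cannot close at $\gamma=6$.
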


Note that in a similar way with the use of some appropriate cut-off from Section \ref{sec:5}, it is possible to extend the homogenisation result from \cite{Lu:Schw} to the case $ \gamma = 6 $.

In the remaining we will show Theorem \ref{Theo:main}. We will start by recalling properties of the so called Bogovski\u{\i}, we present the improved estimates for the pressure and finally we show how to pass to the limit in the weak formulation.

\section{Proof of the main theorem}

In this section we show Theorem \ref{Theo:main}. We start by presenting a Bogovski\u{\i} type operator for time dependent domain. We prove a priori estimates for the velocity field and the pressure. Finally we show how to pass to the limit in the weak momentum equation with the help of an appropriate cut-off.

\subsection{The Bogovski\u{\i} operator for time dependent domain}

A key point to show the improved pressure estimates is to be able to invert the divergence operator. For domains with tiny holes this operator was widely studied for example in \cite{Fei:Lu} and \cite{Lu:Schw} where they extended a construction introduced by Allaire in \cite{All}.

The Bogovski\u{\i} operarator associated with the initial solid position $ \calS_{\eps} $ is defined as the composition of three operators an extension operator, a Bogovski\u{\i} operator associated with a subdomain of $ \Omega $ and a restriction operator. In particular we define the extension operator 
\begin{equation*}
\calE_{\eps}: L^{r}(\R^{3} \setminus \calS_{0,\eps}) \longrightarrow L^{r}(\R^3) \quad \text{ where } \calE_{\eps}(f) = f \text{ for } x \in \R^{3} \setminus \calS_{0,\eps} \text{ and } \calE_{\eps}(f) = 0 \text{ elsewhere}.
\end{equation*}
The Bogovski\u{\i} operator $ \calB_{\Omega_1}$ associted with the domain $ \Omega_1 $ as Theorem III.3.2 of \cite{Gal}. And finally the { restriction} operator 
\begin{equation*}
\calR_{\eps}[u] = \eta_{\eps} u + B_{\eps}[\DIV((1-\eta_{\eps}) u) - \ll \DIV (u) \gg_{\calS_{0,\eps}})]  
\end{equation*}
where $ \eta_{\eps}(x-h_{\eps}(0)) = \eta((x-h_{\eps,0})/\eps) $ with $ 1 - \eta \in C^{\infty}_c(B_2(0)) $ such that $ 0 \leq \eta \leq 1 $ and $ 1-\eta = 1 $ in $ B_{1}(0) $, $ B_{\eps} $ a Bogovski\u{\i} operator, from Theorem III.3.2 of \cite{Gal}, associated with the domain $ B_{2\eps}(h_{\eps,0}) \setminus \calS_{\eps,0} $ and 
\begin{equation*}
\ll \DIV u \gg_{ \calS_{0,\eps}} = \frac{1}{|B_{\eps}(0) \setminus \calS_{0,\eps}|} \int_{\calS_{0,\eps}} \DIV u.
\end{equation*}
This definition is analogous to the one presented in \cite{Lu:Schw}. In particular the Bogovski\u{\i} operator reads as  
\begin{equation*}
\calB_{\eps} = \calR_{\eps} \circ \calB \circ \calE_{\eps}: L^{r}(\Omega_{1} \setminus \calS_{0,\eps}) \to W^{1,r}_{0}(\Omega \setminus \calS_{0,\eps})).  
\end{equation*} 
The position of the solid evolves in time and its position can be recover by the position of the center of mass $ h_{\eps} $ and the rotation matrix $ Q_{\eps} $. We define a time dependent Bogovski\u{\i} by consider { extention and restriction operators} that follow the rigid body, in particular 
\begin{gather*}
\calE^{t}_{\eps}[f(t,x)](x) = \calE^0_{\eps}[f(t,h(t_{\eps}+ Q(t)y)](Q^{T}(t)(x-h'(t))), \\ \calR^{t}_{\eps}[F(t,x)](x) = 
\calR^0_{\eps}[F(t,h(t_{\eps}+ Q(t)y)](Q^{T}(t)(x-h'(t))) \\ \text{ and } \\
\calB^t_{\eps} = \calR^t_{\eps} \circ \calB^t \circ \calE^t_{\eps}.
\end{gather*}  

We recall some estimates independent of the small parameter $ \eps $ related to the Bogovski\u{\i} { operator} proved in \cite{Lu:Schw}. 

\begin{Proposition}
\label{PROP:2}
Let $ \Omega_1 \subset \Omega $ with Lipschitz boundary and let $ 1 < q \leq 3 $. Then the linear operator $ \calB^t_{\eps}: L^{q}(\Omega_1 \setminus \calS_{0, \eps}) \to W^{1,q}_{0}(\Omega; \R^{3}) $ such that for any $ f \in L^{q} $ such that $ \int_{\Omega_1 \setminus \calS_{0, \eps}} f = 0 $, it holds
\begin{equation*}
\DIV \calB^t_{\eps}(f) = \calE^t(f) \text{ in } \Omega, \quad \| \calB^t_{\eps}(f) \|_{W^{1,q}(\Omega)} \leq C \|f\|_{L^{q}(\Omega_1 \setminus \calS_{0, \eps})},
\end{equation*}
for some $ C $ independent of $ \eps $. Moreover, 
\begin{equation*}
\| \calB^t_{\eps}(f) \|_{L^{\infty}(\Omega)} \leq C \|f\|_{L^{3}(\Omega_1 \setminus \calS_{0, \eps})}.
\end{equation*}
%

%
%
\end{Proposition}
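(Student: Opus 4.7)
The plan is to reduce the proposition to the stationary construction of $\calB^0_{\eps}$ carried out in \cite{Lu:Schw} at $t=0$, and then to transport the estimates to arbitrary time $t$ through the rigid motion of the body. So in a first step I would collect the needed bounds for the stationary operator; in a second step I would show that the change of variables $\Phi_t(y) = h_{\eps}(t) + Q_{\eps}(t)y$ is an isometry that carries these bounds to $\calB^t_{\eps}$ without deterioration.

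For the stationary part, the three factors of $\calB^0_{\eps} = \calR^0_{\eps} \circ \calB \circ \calE^0_{\eps}$ are treated separately. The zero extension $\calE^0_{\eps}$ is an $L^q$-isometry. The Bogovski\u{\i} operator $\calB$ on the fixed Lipschitz domain $\Omega_1$ satisfies the Galdi estimates (Theorem III.3.2 of \cite{Gal}). The delicate piece is the restriction $\calR^0_{\eps}[u] = \eta_{\eps} u + B_{\eps}[\DIV((1-\eta_{\eps})u) - \ll \DIV u \gg_{\calS_{0,\eps}}]$. Here I would (i) control $\|\eta_{\eps} u\|_{W^{1,q}}$ by using the $\eps^{-1}$ bound of $\nabla \eta_{\eps}$ supported in the annulus $B_{2\eps} \setminus B_{\eps}$ together with a Hardy-type inequality, and (ii) bound the local Bogovski\u{\i} $B_{\eps}$ on $B_{2\eps}(h_{\eps,0}) \setminus \calS_{0,\eps}$ by a scaling argument from the fixed reference annulus $B_2(0) \setminus \calS_0$, producing constants independent of $\eps$. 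The normalization $\ll \DIV u \gg_{\calS_{0,\eps}}$ is exactly the one that makes the argument of $B_{\eps}$ have zero mean, as required by Galdi's theorem.

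For the time-dependent extension, the map $\Phi_t$ is an affine isometry of $\R^3$ since $Q_{\eps}(t) \in SO(3)$, so pre- and post-composition with it preserves $L^q$ and $W^{1,q}$ norms, and one has the identity $\DIV(Q v \circ \Phi_t^{-1}) = (\DIV v) \circ \Phi_t^{-1}$ for any vector field $v$ and orthogonal $Q$. Since $\calE^t_{\eps}$, $\calB^t$ and $\calR^t_{\eps}$ are obtained precisely by conjugating $\calE^0_{\eps}$, $\calB$ and $\calR^0_{\eps}$ with $\Phi_t$, the stationary identities and bounds transfer verbatim to the time-dependent operators, yielding both $\DIV \calB^t_{\eps}(f) = \calE^t(f)$ and the $W^{1,q}$ bound uniformly in $\eps$ and in $t$.

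The $L^{\infty}$ bound is the point I expect to be the main obstacle: since $W^{1,3}(\Omega) \not\hookrightarrow L^{\infty}(\Omega)$ it cannot be deduced from the $W^{1,3}$ estimate by Sobolev embedding. Instead one has to exploit the explicit Calderon-Zygmund kernel representation of $B_{\eps}$, which yields a sharp $L^3 \to L^{\infty}$ bound on the annulus with $\eps$-independent constant (again by rescaling from the fixed reference annulus), and to combine this with a direct pointwise estimate of $\eta_{\eps} u$ using $\|\eta_{\eps}\|_{L^{\infty}} \leq 1$ together with a Sobolev-Morrey control of $u$ in the bulk. Both of these were worked out in \cite{Lu:Schw} for the stationary operator, and the isometric conjugation leaves them intact.
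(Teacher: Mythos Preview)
Your approach matches the paper's: the paper does not give a self-contained proof of this proposition but simply observes that it is Proposition~2.2 of \cite{Lu:Schw} (with their parameter $\alpha=1$, and without the $\eps^{-3}$ factor coming from the number of holes), and the passage from $\calB^0_{\eps}$ to $\calB^t_{\eps}$ is exactly the conjugation by the rigid isometry $\Phi_t$ you describe, which preserves all norms and the divergence identity. One small correction on the $L^{\infty}$ bound: the sharp $L^{3}\to L^{\infty}$ estimate for the Bogovski\u{\i} operator is not obtained in \cite{Lu:Schw} via the Calder\'on--Zygmund kernel analysis you sketch, but is the Bourgain--Brezis borderline result (see also Remark~III.3.7 of \cite{Gal}), which the paper invokes explicitly when this estimate is used later; your instinct that Sobolev embedding alone cannot give it is correct, but the mechanism is different from what you wrote.
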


Note that the above proposition is Proposition 2.2 of \cite{Lu:Schw} with $ \alpha = 1 $ and in the exponent of the $ \eps $ do not appear the $ -3 $ because it is coming from the fact that the consider a problem with $ \eps^{-3} $ holes.

We are now ready to show the pressure estimates. 

\subsection{Energy estimate and improved pressure estimate}

In this section we prove all the a priori estimates that we need to show Theorem \ref{Theo:main}. 

Let $ (\calS_{\eps}, \rho_{\eps}, u_{\eps}) $ { be } weak solutions of \eqref{equ:CNS:RB} that satisfy the hypothesis of Theorem \ref{Theo:main}. From the energy inequality \eqref{ene:est:CNS:RB}, we deduce that 
\begin{align*}
\int_{\F_{\eps(t)}} \frac{1}{2}  \rho_{\F, \eps} |u_{\F}|^2(t,.) +  & \,  \frac{\rho_{\eps, \F}^{\gamma}(t,.)}{\gamma-1} \, dx +  \frac{1}{2}m_{\eps} |\ell_{\eps}(t)|^2 + \frac{1}{2}\omega_{\eps}(t)\cdot\calJ_{\eps}(t) \omega_{\eps}(t)\\ & \, + \int_{0}^{t}\int_{\F_{\eps}(\tau} \mu | Du_{\F, \eps}|^2+ \lambda | \DIV u_{\F,\eps} |^2 \, dx d\tau \leq  \int_{\Omega} \frac{1}{2} \frac{|q_{\eps, 0}|^2}{\rho_{\eps,0}} +  P(\rho_{\eps, 0}) dx.
\end{align*} 
We deduce that
\begin{gather}
\| \rho_{\F, \eps} u_{\F, \eps} \|_{L^{\infty}(0,T; L^{2}(\F_{\eps}(t)))} \leq C, \nonumber  \\
\|\rho_{\eps,\F}\|_{L^{\infty}(0,T;L^{\gamma}(\F_{\eps}(t)))} \leq C, \nonumber \\
 \eps^{\frac{3 \gamma-4}{2\gamma}} \| \ell_{\eps} \|_{L^{\infty}(0,T)} \longrightarrow  0, \label{en:est} \\
\eps^{1+\frac{3 \gamma-4}{2\gamma}} \| \omega_{\eps} \|_{L^{\infty}(0,T)} \longrightarrow 0,  \nonumber \\
\|u_{\F,\eps}\|_{L^2(0,T;W^{1,2}(\F_{\eps}(t))} \leq C, \nonumber
\end{gather}
where $ C $ is a constant independent of $ \eps $. 

We will use the above estimates to show some better integrability of the pressure. For simplicity from now  we denote by $ \rho_{\eps} $ the extention by zero of the fluid velocity $ \rho_{\F, \eps} $ in the interior of the solid $ \calS_{\eps} $. 
 
\begin{Proposition}[Pressure estimates]
\label{PROP:1}
Let $ \Omega_1 $ { be } a set compactly contained in $ \Omega $ with Lipschitz boundary and let
 $ (\calS_{\eps}, \rho_{\eps}, u_{\eps}) $  { be } solutions of \eqref{equ:CNS:RB} in the sense of Definition \ref{DEF:CNS:RB} associated with the initial data $ (\calS_{0,\eps}, \rho_{\eps}, q_{0, \eps})$. Then 
\begin{equation*}
\int_{0}^{t} \int_{\Omega_1 \setminus \calS_{\eps}(t)} \rho^{3\gamma/2}_{\eps}  \leq C 
\end{equation*}  
for $ t < T $ and $ C $ is independent of $ \eps $ and depends on the $ L^2 $ norm of $ q_0/\sqrt{\rho_0} $ and the $ L^{\gamma} $ norm of $ \rho_0 $. 

\end{Proposition}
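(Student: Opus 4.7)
The strategy is the classical Bogovski\u{\i} trick for improved pressure estimates, adapted to the moving solid through the time-dependent operator $\calB^t_\eps$ constructed in the previous subsection. My plan is to set $\theta = \gamma/2$ and use as test function in the weak momentum equation
$$\varphi_\eps(t,x) = \psi(t)\,\calB^t_\eps\!\bigl[\rho_\eps^\theta(t,\cdot) - \langle \rho_\eps^\theta\rangle_{\eps}(t)\bigr](x),$$
where $\psi \in C^\infty_c([0,T))$ is a temporal cutoff and $\langle\cdot\rangle_\eps$ denotes the average over $\Omega_1 \setminus \calS_\eps(t)$, inserted so that the Bogovski\u{\i} operator applies. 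By the explicit form of the restriction $\calR^t_\eps$, this $\varphi_\eps$ vanishes identically on $\calS_\eps(t)$; hence it is compatible with the rigid-body constraint and, after a standard mollification in time, belongs to the admissible class $\W$.

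Testing $\varphi_\eps$ against the momentum equation and isolating the pressure term produces on one side the desired quantity $\int_0^T \psi \int \rho_\eps^{\gamma+\theta}$ and on the other five contributions: the initial datum $\int q_{0,\eps}\varphi_\eps(0)$, the viscous stress $\int \bbS(\nabla u_\eps) : D\varphi_\eps$, the convective flux $\int \rho_\eps u_\eps \otimes u_\eps : D\varphi_\eps$, the mean-correction $\int p(\rho_\eps) \langle\rho_\eps^\theta\rangle_\eps$, and the time-derivative term $\int \rho_\eps u_\eps \cdot \partial_t\varphi_\eps$. The first four are handled by a routine application of Proposition \ref{PROP:2}: $D\varphi_\eps$ and $\varphi_\eps$ are controlled in the appropriate Lebesgue spaces by $\|\rho_\eps^\theta\|_{L^q}$, and H\"older against the bounds \eqref{en:est} closes the loop. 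The choice $\theta = \gamma/2$ is the largest exponent compatible with these interpolations while still keeping the right-hand side bounded by $\int \rho_\eps^{\gamma+\theta}$ to a power strictly less than one, so that the pressure term can be absorbed.

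The genuine obstacle, and the reason the hypothesis \eqref{size} enters, is the time-derivative term. Two pieces arise. The first comes from differentiating the argument of $\calB^t_\eps$: using the renormalized continuity equation, $\partial_t \rho_\eps^\theta = -\DIV(\rho_\eps^\theta u_\eps) - (\theta-1)\rho_\eps^\theta \DIV u_\eps$, and this reduces to integrals already controlled exactly as in the fixed-domain case. The second, new contribution is produced by the fact that $\calE^t_\eps$ and $\calR^t_\eps$ are built from the solid trajectory $(h_\eps(t), Q_\eps(t))$, so the time derivative of the operator generates commutators involving $\ell_\eps$ and $\omega_\eps$. A change of variable to the reference configuration, combined with the $L^\infty$ bound of Proposition \ref{PROP:2}, yields as announced in the introduction a contribution of order
$$\eps^{\frac{5\gamma-\theta-6}{2(\gamma+\theta)}}\bigl(|\ell_\eps| + \eps|\omega_\eps|\bigr).$$
The degenerate energy bounds \eqref{en:est} show that this quantity is uniformly small exactly under assumption \eqref{size} on the mass $m_\eps$ and the inertia $\calJ_{\calS_{0,\eps}}$; this is precisely where the lower bound on $m_\eps$ and $\calJ_{\calS_{0,\eps}}$ is consumed. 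Absorbing the pressure term on the left-hand side and specializing to $\theta = \gamma/2$ delivers the claimed uniform bound on $\int_0^T \int_{\Omega_1 \setminus \calS_\eps(t)} \rho_\eps^{3\gamma/2}$.
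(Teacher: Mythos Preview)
Your proposal is correct and follows essentially the same strategy as the paper: test the momentum equation with $\phi\psi\,\calB^t_\eps[\psi\rho_\eps^{\gamma/2}-\langle\psi\rho_\eps^{\gamma/2}\rangle]$, estimate the standard terms via Proposition~\ref{PROP:2} and the energy bounds, and isolate the time derivative of the moving Bogovski\u{\i} operator as the place where \eqref{size} is consumed. The only point where the paper goes substantially beyond your sketch is that your ``two pieces'' of $\partial_t\calB^t_\eps$ are not cleanly separated---the paper shows that a term from $\calR^t_\eps[\calB_{\Omega_2}[\partial_t(\psi\rho_\eps^\theta)]]$ must be combined with a commutator term to produce $-B_\eps[\DIV((1-\eta_\eps)\psi\rho_\eps^\theta(u_\eps-u_{S,\eps}))]$, and then an auxiliary cutoff $\chi_\eps$ with $|\mathrm{supp}\,\chi_\eps|\le\eps^{(3\gamma-4)/(\gamma-2)}$ is introduced to split $u_\eps$ and $u_{S,\eps}$ because of their different time integrability; this cancellation and splitting are the genuine technical core, so your phrase ``exactly as in the fixed-domain case'' undersells what remains to be done.
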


\begin{Remark}
The key point is to get the higher integrability of the pressure. In the stationary compressible fluid without structure we can refer to \cite{Fei:Lu}, where they considered the case $\gamma \geq 3$ which guarantees them the $L^2$ integrability of the pressure. Case $3/2<\gamma<3$ was done in the work of \cite{Lu} and more general case in \cite{DFL}. The instationary case is much more delicate problem and it was developed in \cite{Lu:Schw}. For them to get the higher integrability they required $\gamma >6$. In our case we consider particular case with one rigid body which is shrinking and we ``relax'' our assumption on $\gamma \geq 6$. In comparison with incompressible case \cite {Je:1},  we need restriction  on the mass and the angular momentum, see condition \ref{size}.
\end{Remark}

\begin{Remark} Although the above result is enough to prove Theorem \ref{Theo:main}, we also show a uniform bound on the pressures $ \rho_{\eps} $ in $ L^{\gamma + \theta} $ for $ \gamma/2 < \theta \leq 2\gamma/3-1 $, assuming some more restictive hypothesis on the mass $ m_{\eps} $ and the inetia matrix $ \calJ_{\eps} $. 
\end{Remark}

\begin{Proposition} 
\label{PROP:3}
Let $ \gamma > 6 $, let $ \theta \in (\gamma/2,  2\gamma/3-1]$ and suppose that 
\begin{equation}
\label{extra:hyp}
m_{\eps} \geq c_1 \eps^{\frac{5\gamma-\theta-6}{\gamma+\theta}} \quad \text{ and } \quad   \xi \cdot \calJ_{\calS_{0,\eps}} \xi \geq c_2  \eps^{2+ \frac{5\gamma-\theta-6}{\gamma+\theta}} |\xi|^2.
\end{equation}
Let $ \Omega_1 $ a set compactly contained in $ \Omega $ with Lipschitz boundary and let $ (\calS_{\eps}, \rho_{\eps}, u_{\eps}) $ solutions of \eqref{equ:CNS:RB} in the sense of Definition \ref{DEF:CNS:RB} associated with the initial data $ (\calS_{0,\eps}, \rho_{\eps}, q_{0, \eps})$. Then 
\begin{equation*}
\int_{0}^{t} \int_{\Omega_1 \setminus \calS_{\eps}(t)} \rho^{\gamma +\theta}_{\eps}  \leq C 
\end{equation*}  
for $ t < T $ and $ C $ is independent of $ \eps $ and depends on the $ L^2 $ norm of $ q_0/\sqrt{\rho_0} $ and the $ L^{\gamma} $ norm of $ \rho_0 $.

\end{Proposition}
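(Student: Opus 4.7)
The strategy is the standard Bogovski\u{\i} test-function trick for higher integrability of the pressure in the compressible Navier--Stokes system, adapted here to the moving obstacle via the time-dependent operator $\calB^t_\eps$ from the previous subsection. Let $\phi \in C^{\infty}_c([0,T))$ be a nonnegative scalar cutoff and test the weak momentum equation with
\[
\varphi_\eps(t,x) \;=\; \phi(t)\,\calB^t_\eps\!\bigl(\rho_\eps^\theta - \ll\rho_\eps^\theta\gg\bigr),
\]
where $\ll\rho_\eps^\theta\gg$ is the spatial mean making the argument have zero integral over $\Omega_1 \setminus \calS_\eps(t)$. By construction the restriction operator $\calR^t_\eps$ produces a function supported away from a neighbourhood of $\calS_\eps(t)$, so (modulo a routine smoothing in $x$) $\varphi_\eps \in \W$. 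Since $\DIV \calB^t_\eps(f) = \calE^t_\eps(f)$, the pressure term yields, up to the easily controlled mean-value contribution, exactly $\int_0^T \phi\int_{\Omega_1 \setminus \calS_\eps(t)} \rho_\eps^{\gamma+\theta}\,dx\,dt$, and the proof is reduced to bounding the remaining four contributions uniformly in $\eps$.

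The initial-data, viscous and convective terms are handled by H\"older interpolation combined with the uniform bounds \eqref{en:est}. Since $\theta \leq 2\gamma/3 - 1 < \gamma$, one has $\rho_\eps^\theta \in L^{\infty}(0,T;L^{\gamma/\theta}(\Omega))$ with $\gamma/\theta \in (3/2,2)$, so Proposition \ref{PROP:2} gives $\nabla \varphi_\eps \in L^{\infty}(0,T;L^{\gamma/\theta}(\Omega))$, which pairs against $\nabla u_\eps \in L^2(L^2)$ on the viscous term and, after a Sobolev embedding on $\calB^t_\eps$, against the interpolated bound for $\rho_\eps |u_\eps|^2$ on the convective term. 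The initial contribution is absorbed using $q_0/\sqrt{\rho_0} \in L^2$ together with the $L^\infty$ bound of Proposition \ref{PROP:2}.

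The delicate term is the time-derivative contribution $\int \rho_\eps u_\eps \cdot \partial_t \varphi_\eps$. Decomposing
\[
\partial_t \varphi_\eps \;=\; \phi'\,\calB^t_\eps\bigl(\rho_\eps^\theta - \ll\rho_\eps^\theta\gg\bigr) \;+\; \phi\,\calB^t_\eps\!\bigl(\partial_t \rho_\eps^\theta - \partial_t\ll\rho_\eps^\theta\gg\bigr) \;+\; \phi\,(\partial_t \calB^t_\eps)\!\bigl(\rho_\eps^\theta - \ll\rho_\eps^\theta\gg\bigr),
\]
the first piece is trivial. For the second, the renormalised continuity equation $\partial_t \rho_\eps^\theta + \DIV(\rho_\eps^\theta u_\eps) + (\theta-1)\rho_\eps^\theta \DIV u_\eps = 0$ trades the time derivative for spatial operators absorbed by the $W^{-1,q} \to L^q$ bound on $\calB^t_\eps$. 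The third piece is the source of hypothesis \eqref{extra:hyp}: the operator $\calB^t_\eps$ depends on $t$ only through the isometry $x \mapsto h_\eps(t) + Q_\eps(t)(x - h_\eps(0))$ that tracks the solid, so differentiating it pulls out a factor proportional to $|\ell_\eps| + \eps|\omega_\eps|$. A careful H\"older book-keeping, using the $\eps$-scaling of the cutoff $\eta_\eps$ and both the $W^{1,q}$ and the $L^\infty$ estimates of Proposition \ref{PROP:2}, shows that this piece is dominated by
\[
C\,\eps^{\frac{5\gamma-\theta-6}{2(\gamma+\theta)}}\bigl(\|\ell_\eps\|_{L^{\infty}(0,T)} + \eps\,\|\omega_\eps\|_{L^{\infty}(0,T)}\bigr)
\]
times an energy-bounded factor.

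Under hypothesis \eqref{extra:hyp}, the energy inequality upgrades to $\|\ell_\eps\|_{L^\infty} \lesssim \eps^{-(5\gamma-\theta-6)/(2(\gamma+\theta))}$ and $\|\omega_\eps\|_{L^\infty} \lesssim \eps^{-1-(5\gamma-\theta-6)/(2(\gamma+\theta))}$, which is \emph{exactly} what is needed to keep the critical contribution bounded as $\eps \to 0$. The main obstacle in the whole argument is precisely this third piece: tracking the exact power of $\eps$ carried by $\partial_t \calB^t_\eps$ so that it cancels the singularity of the solid velocity permitted by the lower bound on $m_\eps$ and $\calJ_\eps$; the choice of the exponent in \eqref{extra:hyp} is dictated by forcing this cancellation for the specified range of $\theta$. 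Once this balance is in place, every term on the right-hand side is bounded uniformly in $\eps$, and taking $\phi$ to approximate $\mathds{1}_{[0,t]}$ yields the claim.
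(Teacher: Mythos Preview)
Your sketch captures the right architecture---Bogovski\u{\i} test function, decomposition of $\partial_t\varphi_\eps$, and the role of \eqref{extra:hyp} in neutralising the solid-velocity piece---but it has a genuine gap in the viscous estimate, and a related omission of the bootstrap that the paper's proof relies on.

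You write that $\nabla\varphi_\eps\in L^\infty(0,T;L^{\gamma/\theta}(\Omega))$ with $\gamma/\theta\in(3/2,2)$ ``pairs against $\nabla u_\eps\in L^2(L^2)$''. It does not: since $\theta>\gamma/2$ one has $\gamma/\theta<2$, so H\"older in space fails. The paper resolves this by first invoking Proposition~\ref{PROP:1} (the case $\theta=\gamma/2$), which gives a uniform $L^{3\gamma/2}$ bound on $\rho_\eps$; then $\|\rho_\eps^\theta\|_{L^2_{t,x}}\le\|\rho_\eps\|_{L^{2\theta}}^\theta\le\|\rho_\eps\|_{L^{3\gamma/2}}^\theta$ since $2\theta\le 3\gamma/2$ in the stated range. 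Without that prior $L^{3\gamma/2}$ estimate the viscous term (and also the $J_6$-type pressure-against-cutoff term, once you reinstate the spatial cutoff $\psi$ localising to $\Omega_1$) cannot be closed. The proof of Proposition~\ref{PROP:3} is genuinely a \emph{bootstrap} from Proposition~\ref{PROP:1}, and your proposal reads as if it were self-contained.

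A second imprecision: the factor multiplying $\eps^{\frac{5\gamma-\theta-6}{2(\gamma+\theta)}}(|\ell_\eps|+\eps|\omega_\eps|)$ is not ``energy-bounded'' but rather $\|\rho_\eps\|_{L^{\gamma+\theta}}^\theta$, i.e.\ a sublinear power of the very quantity you are estimating; the argument is closed by absorption, not by a priori bounds. Finally, the terms you label ``the second piece'' (those of type $\rho_\eps u_\eps\,\calB[\rho_\eps^\theta\DIV u_\eps]$) are not disposed of by a generic $W^{-1,q}\to L^q$ bound: the paper needs a specific interpolation (finding $\alpha\in[0,1]$ balancing $\|\rho_\eps u_\eps\|_{L^rL^s}$ against $\|\calB[\rho^\theta\DIV u]\|_{L^{r'}L^{s'}}$) that only goes through for $\theta\le 2\gamma/3-1$, which is exactly where the upper endpoint of your range comes from.
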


Let us start with the proof of Proposition \ref{PROP:1}.

\subsubsection{Proof of Proposition \ref{PROP:1}}

We follow the classical idea to prove the improved regularity for the pressure, in other words, we test the momentum equation with the Bogovski\u{\i} operator $ \calB_{\eps}^t$ {applied} to $ \rho_{\eps}^{\theta} $. The fact that the Bogovski\u{\i} operator is time dependent { creates} new difficulties and the estimates of this terms are the main novelty.

\begin{proof}[Proof of Proposition \ref{PROP:1}]

Let $ \Omega_2 $ { be } an open set with Lipschitz boundary such that $ \Omega_1 \subset \Omega_{2} \subset \Omega $ and the inclusion are compact. Then we test the momentum equation of \eqref{equ:CNS:RB} with 
\begin{equation*}
\phi \psi \calB_{\eps}^t[ \psi \rho_{\eps}^{\theta}  - \langle   \psi \rho^{\theta} \rangle ] = \phi \psi \calR_{\eps}^t \circ \calB_{\Omega_2} \circ \calE^t [\psi \rho_{\eps}^{\theta} - \langle   \psi \rho^{\theta} \rangle ]
\end{equation*}  
where $ \phi\in C^{\infty}_{c}[0,T)$, $ \psi \in C^{\infty}_{c}(\Omega_2) $ and $ \psi \equiv 1 $ in $ \Omega_1$,  $ \theta = \gamma/2 $ and $ \langle . \rangle $ denotes the average.

We note that actually the test function is not enough smooth in the time variable but  the following estimates can be made rigorous by {convoluting} the test function with a smoothing kernel following the trajectory of the rigid motion and proceed as in Section 7.9.5 of \cite{NovS}.

If we test the momentum equation with $ \phi \psi \calB_{\eps}^t[\psi \rho^{\theta}_{\eps} - \langle   \psi \rho^{\theta}_{\eps} \rangle ] $  we deduce 
\begin{equation}
\label{eq:eq:3}
\int_{0}^{T}\int_{\F_{\eps}(t)} \phi  \psi^2 \rho^{\gamma + \theta}_{\eps} = \sum_{i = 1}^8 J_i,
\end{equation}
where
\begin{gather*}
J_1 = - \int_{0}^{T} \int_{\F_{\eps}(t)}   \rho_{\eps}  u_{\eps}   \phi \psi \partial_{t} \calB_{\eps}^{t}[\psi \rho_{\eps}^{\theta} - \langle \psi \rho_{\eps}^{\theta} \rangle ], \quad 
J_2 = - \int_{0}^{T} \int_{\F_{\eps}(t)}  \rho_{\eps}  u_{\eps}  \psi \phi' \calB_{\eps}^t[\psi \rho_{\eps}^{\theta} - \langle \psi \rho_{\eps}^{\theta} \rangle ], \\
J_3 = \int_{0}^{T} \int_{\F_{\eps}(t)} \phi  \rho_{\eps}   u_{\eps}  \otimes u_{\eps}  : \nabla \left( \psi \calB_{\eps}^t[ \psi \rho_{\eps}^{\theta} - \langle  \psi \rho_{\eps}^{\theta} \rangle ]\right), \quad 
J_{4} = 2\mu \int_{0}^{T} \int_{\F_{\eps}(t) } \phi  D \left( \psi u_{\eps}\right)  : D   \left( \calB_{\eps} ^t [\psi \rho_{\eps}^{\theta} - \langle \psi \rho_{\eps}^{\theta} \rangle ]\right), \quad \\
J_{5}  = \lambda  \int_{0}^{T} \int_{\F_{\eps}(t)}\phi \DIV (u_{\eps} ) \DIV \left(\psi \calB_{\eps}^t[\psi \rho_{\eps}^{\theta} - \langle \psi \rho_{\eps}^{\theta} \rangle ]\right), \quad
J_6 = - \int_{0}^{T}\int_{\F_{\eps}(t)} \phi  \rho^{\gamma}_{\eps} \nabla \psi \calB_{\eps}^t[ \psi \rho_{\eps}^{\theta}  - \langle   \psi \rho^{\theta} \rangle ], \\
J_{7} = \int_{0}^{T} \int_{\F_{\eps}(t)} \rho^{\gamma}_{\eps}  \phi \psi \langle \psi \rho_{\eps}^{\theta} \rangle \quad
\text{ and } \quad  J_{8} = \int_{\F_{\eps}(t) } q_0 \phi(0)\psi(0,.)\calB^0_{\eps} [\psi(0,.) \rho_{\eps}^{\theta}(0,.)- \langle \psi(0,.) \rho_{\eps}^{\theta}(0,.) \rangle]
\end{gather*}
We estimate the right hand side of \eqref{eq:eq:3}, by considering all the terms $ J_i $ separately. We { start} from the easiest one
\begin{align*}
| J_8| \leq \, &\left\| \frac{q_{\eps,0}}{\sqrt{\rho_{\eps,0}}} \right\|_{L^{2}(\F_{\eps}(0))} \| \sqrt{\rho_{\eps,0}} \phi(0)\psi(0,.)\calB_{\eps}^0[\psi(0,.) \rho_{\eps}^{\theta}(0,.)- \langle \psi(0,.) \rho_{\eps}^{\theta}(0,.) \rangle] \|_{L^{2}(\F_{\eps}(0))} \\
\leq \, & \left\| \frac{q_{\eps,0}}{\sqrt{\rho_{\eps,0}}} \right\|_{L^{2}(\F_{\eps}(0))} \| \sqrt{\rho_{\eps,0}}\|_{L^{2\gamma}(\F_{\eps}(0))}
\| \phi(0)\psi(0,.) \calB_{\eps}^0[\psi(0,.) \rho_{\eps}^{\theta}(0,.)- \langle \psi(0,.) \rho_{\eps}^{\theta}(0,.) \rangle]  \|_{L^{2\gamma/(\gamma-1)(\F_{\eps}(0))}} \\
\leq \, & \left\| \frac{q_{\eps,0}}{\sqrt{\rho_{\eps,0}}} \right\|_{L^{2}(\F_{\eps}(0))} \| \sqrt{\rho_{\eps,0}}\|_{L^{2\gamma}(\F_{\eps}(0))} \|\psi(0,.) \rho_{\eps}^{\theta}(0,.) \|_{L^{6\gamma/(5\gamma-3)(\F_{\eps}(0))}},
\end{align*}
which is bounded because $ 6\gamma/(5\gamma-3) < 2 $ for $ \gamma > 3/2 $ and $ \rho_{\eps} \in C_{w}(0,T; L^{\gamma}(\Omega_{\delta}))$.

\begin{align*}
|J_7| \leq & \, C \| \rho_{\eps} \|_{L^{\infty}(0,T;L^{\gamma}(\F_{\eps}(t)))} \int_{0}^{T} \int_{\F_{\eps}(t)} \rho^{\theta}_{\eps}  \\
\leq & \, C \| \rho_{\eps}\|_{L^{\infty}(0,T;L^{\gamma}(\F_{\eps}(t)))}^{\theta},
\end{align*}
where we use $ \theta \leq \gamma $.

\begin{align*}
|J_{6}|  = & \, \left| \int_{0}^{T}\int_{\F_{\eps}(t)} \phi  \rho^{\gamma}_{\eps} \nabla \psi \calB_{\eps}^t[ \psi \rho_{\eps}^{\theta}  - \langle   \psi \rho^{\theta} \rangle ] \right| \\
\leq & \, \| \rho_{\eps} \|_{L^{\infty}(0,T;L^{\gamma}(\F_{\eps}(t))}\| \calB_{\eps}^t[ \psi \rho_{\eps}^{\theta}  - \langle   \psi \rho^{\theta} \rangle ] \|_{L^1(0,T;L^{\infty}(\F_{\eps}(t)))} \\
\leq & \, C \| \psi \rho_{\eps}^{\theta}  - \langle   \psi \rho^{\theta} \rangle \|_{L^3(0,T;L^{3}(\F_{\eps}(t)))} \\
\leq & \, C \| \psi^{1/{\theta}}\rho \|_{L^{3\gamma/2}(0,T;L^{3\gamma/2}(\F_{\eps}(t)))}^{\theta}.
\end{align*} 
where we use $ 2\theta \leq \gamma $.

\begin{align*}
|J_5 | =  & \,\left|\lambda \int_{0}^{T} \int_{\F_{\eps}(t)} \phi \psi \DIV (u_{\eps}) \left(\psi \rho_{\eps}^{\theta} - \langle \psi \rho_{\eps}^{\theta} \rangle \right) + \phi \DIV (u_{\eps}) \nabla \psi \cdot \calB_{\eps}^t[\psi \rho_{\eps}^{\theta} - \langle \psi \rho_{\eps}^{\theta} \rangle]\right| \\
\leq & \, C \|u_{\eps}\|_{L^2(0,T;W^{1,2}_{0}(\F_{\eps}(t)))} \left( \| \rho_{\eps} \|^{\theta}_{L^{\infty}(0,T;L^{\gamma}(\F_{\eps}(t)))} + \|\langle  (\psi \rho_{\eps}^{\theta} \rangle\|_{L^{2}(0,T)} \right) \\
\leq & \, C \|u_{\eps}\|_{L^2(0,T;W^{1,2}_{0}(\F_{\eps}(t)))} \| \rho_{\eps} \|^{\theta}_{L^{\infty}(0,T;L^{\gamma}(\F_{\eps}(t)))},  
\end{align*}
where we use $ 2\theta \leq \gamma $.

\begin{align*}
| J_4 | \leq & \, C \|u_{\eps}\|_{L^2(0,T;W^{1,2}_{0}(\F_{\eps}(t)))} \left\| \phi \nabla (\psi \calB_{\eps}^t [\psi \rho_{\eps}^{\theta} - \langle \psi \rho_{\eps}^{\theta} \rangle ]) \right\|_{L^{2}(0,T;L^{2}(\F_{\eps}(t)))}  \\
\leq & \,  C \|u_{\eps}\|_{L^2(0,T;W^{1,2}_{0}(\F_{\eps}(t)))}\left\| \psi \rho_{\eps}^{\theta} - \langle \psi \rho_{\eps}^{\theta} \rangle \right\|_{L^{2}(0,T;L^{2}(\F_{\eps}(t)))}  \\
\leq & \, C \|u_{\eps}\|_{L^2(0,T;W^{1,2}_{0}(\F_{\eps}(t)))} \| \rho_{\eps} \|^{\theta}_{L^{\infty}(0,T;L^{\gamma}(\F_{\eps}(t)))},
\end{align*}
where we use $ 2\theta \leq \gamma $.

\begin{align*}
| J_3 |\leq  C \|u_{\eps}\|_{L^2(0,T;W^{1,2}_{0}(\F_{\eps}(t)))}^2 \left\| \rho_{\eps} \phi \nabla(\psi \calB_{\eps}^t [\psi \rho_{\eps}^{\theta} -  \langle \psi \rho_{\eps}^{\theta} \rangle ]) \right\|_{L^{\infty}(0,T;L^{3/2}(\F_{\eps}(t)))}.
\end{align*}
we estimate the last term as
\begin{align*}
\left\|\rho_{\eps} \nabla (\psi \calB_{\eps}^t[\psi \rho_{\eps}^{\theta} -  \langle \psi \rho_{\eps}^{\theta} \rangle ]) \right\|_{L^{3/2}(\F_{\eps}(t))} \leq \,  & \|\rho_{\eps}\|_{L^{\gamma}(\F_{\eps}(t))} \left\|\nabla (\psi \calB_{\eps}^t[\psi \rho_{\eps}^{\theta} -  \langle  \psi \rho_{\eps}^{\theta} \rangle )] \right\|_{L^{3\gamma/(2\gamma-3)}(\F_{\eps(t)})} \\ 
\leq \,  &  \|\rho_{\eps}\|_{L^{\gamma}(\F_{\eps}(t))} \|\psi \rho_{\eps}^{\theta} -  \langle \psi \rho_{\eps}^{\theta} \rangle\|_{L^{3\gamma/(2\gamma-3)}(\F_{\eps}(t))} \\
\leq \, & C\|\rho_{\eps}\|_{L^{3\gamma\theta /(2\gamma-3)}(\F_{\eps}(t))}^{\theta},
\end{align*}
we conclude that 
\begin{equation*}
| J_3 |\leq  C \|u_{\eps}\|_{L^2(0,T;W^{1,2}_{0}(\F_{\eps}(t)))}^2 \|\rho_{\eps} \|_{L^{\infty}(0,T;L^{\gamma}(\F_{\eps}(t)))} \|\rho_{\eps}\|_{L^{\infty}(0,T;L^{3\gamma\theta /(2\gamma-3)}(\F_{\eps}(t)))}^{\theta},
\end{equation*}
note that $|J_3| $ is bounded assuming $ \theta \leq 2 \gamma/ 3-1$.

\begin{align*}
|J_2| \leq \, & C\|\rho_{\eps}u_{\eps}\|_{L^2(0,T;L^3(\F_{\eps}(t)))}\|\phi'\|_{L^2(0,T)}\| \calB_{\eps}^t[\psi \rho_{\eps}^{\theta} - \langle \psi \rho_{\eps}^{\theta} \rangle ] \|_{L^{\infty}(0,T;L^{3/2}(\F_{\eps}(t)))} \\
\leq \, & C \|\rho_{\eps} u_{\eps}\|_{L^2(0,T;L^3(\F_{\eps}(t)))}\|\phi'\|_{L^2(0,T)}\| \psi \rho_{\eps}^{\theta} - \langle \psi \rho_{\eps}^{\theta} \rangle \|_{L^{\infty}(0,T;L^{1}(\F_{\eps}(t)))}  \\
\leq \, & C\|\rho_{\eps} u_{\eps}\|_{L^2(0,T;L^3(\F_{\eps}(t)))}\|\phi'\|_{L^2(0,T)}\|\rho^{\theta}_{\eps} \|_{L^{\infty}(0,T;L^{1}(\F_{\eps}(t)))}, 
\end{align*}
in partcular it is bounded for $ \theta \leq \gamma $.

We move to the most difficult term which is the one involving the time derivative Bogovski\u{\i}. Recall that $  \rho_{\eps} $ satisfies a transport equation in a renomalizad sense for which we can use $ b(.) = |.|^{\theta} $ with $ \theta = \gamma/2 $ as test function due to Lemma 6.9 of \cite{NovS} apply with $ \beta = \gamma $, $ \lambda_{1} = \gamma/2 -1 $. Moreover $ \psi \rho_{\eps}^{\theta} $ satisfy
\begin{equation}
\label{tra:equ:cut}
\partial_t (\psi \rho_{\eps}^{\theta}) + \DIV (\psi \rho_{\eps}^{\theta} u_{\eps} ) + (\theta -1 ) \psi \rho_{\eps}^{\theta} \DIV u_{\eps} -  \rho_{\eps} \nabla \psi \cdot u_{\eps} = 0. 
\end{equation}
Before showing the estimates of $ J_1 $, we compute the time derivative of the Bogovski\u{\i} operator $ \calB_{\eps}^t $. Let now $ f $  {\ be  a sufficiently } smooth function then 
\begin{align*}
\partial_t \calB_{\eps}^t[f] = & \, \partial_t \left( \calR_{\eps}^t \circ \calB_{\Omega_2} \circ \calE^t_{\eps} [f]\right) \\
= & \, \partial_t \left( \calR_{\eps}^0[B_{\Omega_2}[\calE^t[f]](h(t)+Q(t) y) ](Q^T(t)(x-h(t)))  \right) \\
= & \, \calR_{\eps}^t\left[\partial_{t}B_{\Omega_2}[\calE^t_{\eps}[f]] + (u_{S,\eps}\cdot \nabla) B_{\Omega_2}[\calE^t_{\eps}[f]] \right] - u_{S,\eps}\cdot \nabla \calR_{\eps}^t [ \calB_{\Omega_2} [ \calE^t_{\eps} [f]]] \\ 
= & \, \calR_{\eps}^t\left[\partial_{t}B_{\Omega_2}[\calE^t_{\eps}[f]]\right] + \calR_{\eps}^t \left[(u_{S,\eps}\cdot \nabla) B_{\Omega_2}[\calE^t_{\eps}[f]] \right] - u_{S,\eps}\cdot \nabla \calR_{\eps}^t [ \calB_{\Omega_2} [ \calE^t_{\eps} [f]]].  
\end{align*}   
Moreover the function $ \calE_{\eps}^{t}(\psi \rho^{\theta}_{\eps}) = \psi \rho_{\F,\eps}^{\theta} $ which satisfies \eqref{tra:equ:cut} in $ [0,T] \times \R^{3} $.

We rewrite the term $ \partial_t \calB_{\eps}^t[\psi \rho_{\F,\eps}^{\theta} - \langle \psi \rho_{\F,\eps}^{\theta} \rangle ] $ taking advantage of some cancellation.
For $ f $ with zero average, we denote by $ g = \calE^t_{\eps}[f] $ and exploting the definition of $ \calR_{\eps}^t $
\begin{align*}
 \calR_{\eps}^t \left[(u_{S,\eps}\cdot \nabla) \calB_{\Omega_2}[g] \right] & - u_{S,\eps}\cdot \nabla \calR_{\eps}^t [ \calB_{\Omega_2} [g]]  \\
= \,  & \eta_{\eps} (u_{S,\eps}\cdot \nabla) \calB_{\Omega_2}[g] + B_{\eps}[ \DIV((1-\eta_{\eps})(u_{S,\eps}\cdot \nabla)  \calB_{\Omega_2}[g]) - \ll \DIV((u_{S,\eps}\cdot \nabla) \calB_{\Omega_2}[g]) \gg_{\calS_{\eps}(t)}] \\
& - (u_{S,\eps}\cdot \nabla)\eta_{\eps} \cdot \calB_{\Omega_2}[g] - \eta_{\eps} (u_{S,\eps}\cdot \nabla) \calB_{\Omega_2}[g] \\
& - (u_{S,\eps}\cdot \nabla) B_{\eps}[\DIV((1-\eta_{\eps})\calB_{\Omega_2}[g])-\ll \DIV(\calB_{\Omega_2}[g]) \gg_{\calS_{\eps}(t)}] \\
= \, &  B_{\eps}[ \DIV((1-\eta_{\eps})(u_{S,\eps}\cdot \nabla)  \calB_{\Omega_2}[g]) - \ll \DIV((u_{S,\eps}\cdot \nabla) \calB_{\Omega_2}[g]) \gg_{\calS_{\eps}(t)}] - (u_{S,\eps}\cdot \nabla)\eta_{\eps} \cdot \calB_{\Omega_2}[g] \\ & 
- (u_{S,\eps}\cdot \nabla) B_{\eps}[\DIV((1-\eta_{\eps})\calB_{\Omega_2}[g])] \\
\end{align*}
We rewrite the first term of the last expression. To do that we introduce the notation 
\begin{equation*}
(U_{\eps})_{i,j} = \partial_i(u_{S,\eps})_j =  \partial_i(\omega_{\eps} \times x)_{j}.
\end{equation*}
We have
\begin{align}
\DIV((1-\eta_{\eps})(u_{S,\eps}\cdot \nabla)  \calB_{\Omega_2}[g]) = & \, -\nabla \eta_{\eps} \cdot (u_{S,\eps}\cdot \nabla)  \calB_{\Omega_2}[g]) + (1-\eta_{\eps})U_\eps : \nabla  \calB_{\Omega_2}[g]] \nonumber \\
& \, (1-\eta_{\eps})u_{S,\eps}\cdot \nabla g \nonumber \\
=  & \, -\nabla \eta_{\eps} \cdot (u_{S,\eps}\cdot \nabla)  \calB_{\Omega_2}[g]) + (1-\eta_{\eps})U_\eps : \nabla  \calB_{\Omega_2}[g] \nonumber \\
& \,   g \nabla \eta_{\eps} \cdot u_{S,\eps}  +  u_{S,\eps}\cdot \nabla ((1-\eta_{\eps})g).  \label{canc:equ:fin}
\end{align}
Similarly
\begin{align*}
\DIV((1-\eta_{\eps})\calB_{\Omega_2}[g]) = - \nabla \eta_{\eps}\cdot \calB_{\Omega_2}[g] + (1-\eta_{\eps})g.
\end{align*}
Let now consider 
\begin{align}
\calR_{\eps}^t\left[\calB_{\Omega_2}[\partial_{t}(\psi \rho^{\theta}_{\eps}- \langle \psi \rho^{\theta}_{\eps} \rangle)]\right] 
= & \, \eta_{\eps} \calB_{\Omega_2}\left[ \partial_t(\psi \rho^{\theta}_{\eps}- \langle \psi \rho^{\theta}_{\eps} \rangle) \right] \nonumber \\ & \, + B_{\eps}[\DIV((1-\eta_{\eps})\calB_{\Omega_2}[\partial_{t}(\psi \rho^{\theta}_{\eps}- \langle \psi \rho^{\theta}_{\eps} \rangle)]) - \ll\DIV(\calB_{\Omega_2}[\partial_{t}(\psi \rho^{\theta}_{\eps}- \langle \psi \rho^{\theta}_{\eps} \rangle)]) \gg ] \nonumber \\
= & \, \eta_{\eps} \calB_{\Omega_2}\left[ \partial_t(\psi \rho^{\theta}_{\eps}- \langle \psi \rho^{\theta}_{\eps} \rangle) \right] + B_{\eps}\Big[ - \nabla \eta_{\eps} \cdot \calB_{\Omega_2}[\partial_{t}(\psi \rho^{\theta}_{\eps}- \langle \psi \rho^{\theta}_{\eps} \rangle)])  \nonumber \\
& \, \quad \quad   \quad  \quad  \quad  \quad  \quad \quad  \quad  \quad \quad + (1-\eta_{\eps})\partial_{t}(\psi \rho^{\theta}_{\eps}- \langle \psi \rho^{\theta}_{\eps} \rangle) - \ll \partial_{t}(\psi \rho^{\theta}_{\eps} - \langle \psi \rho^{\theta}_{\eps} \rangle)) \gg \Big]. \label{ave:term:also} 
\end{align}
Using the fact that $ \psi \rho^{\theta}_{\eps} $ satisfies \eqref{tra:equ:cut}, the first term of \eqref{ave:term:also} reads
\begin{align*}
(1-\eta_{\eps})\partial_{t}(\psi \rho^{\theta}_{\eps}- \langle \psi \rho^{\theta}_{\eps} \rangle) = & \, - (1-\eta_{\eps})(\DIV (\psi \rho_{\eps}^{\theta} u_{\eps}) \\ & \, - (1-\eta_{\eps})((\theta -1 ) \psi \rho_{\eps}^{\theta} \DIV u_{\eps} - \langle (\theta -1 ) \psi \rho_{\eps}^{\theta} \DIV u_{\eps} \rangle) \\ & \, + (1-\eta_{\eps})(  \rho_{\eps} \nabla \psi \cdot u_{\eps}  - \langle \rho_{\eps} \nabla \psi \cdot u_{\eps} \rangle) \\
= & \, -\DIV ((1-\eta_{\eps})\psi \rho_{\eps}^{\theta} u_{\eps} ) - \nabla \eta_{\eps} \psi \rho_{\eps}^{\theta} u_{\eps}  \\ & \, -(1-\eta_{\eps})((\theta -1 ) \psi \rho_{\eps}^{\theta} \DIV u_{\eps} - \langle (\theta -1 ) \psi \rho_{\eps}^{\theta} \DIV u_{\eps} \rangle) \\ & \, + (1-\eta_{\eps})(  \rho_{\eps} \nabla \psi \cdot u_{\eps}  - \langle \rho_{\eps} \nabla \psi \cdot u_{\eps} \rangle). 
\end{align*}
Finally we notice that the first term of the right hand side of the above equality together with the last term of \eqref{canc:equ:fin} read
\begin{equation*}
- \DIV ((1-\eta_{\eps})\psi \rho_{\eps}^{\theta} u_{\eps} ) +  u_{S,\eps}\cdot \nabla ((1-\eta_{\eps})\psi \rho_{\eps}^{\theta} u_{\eps})] =  -\DIV ((1-\eta_{\eps})\psi \rho_{\eps}^{\theta} (u_{\eps}-u_{S,\eps} )).
\end{equation*}
Analogously we have 
\begin{gather*}
- \ll \partial_{t}(\psi \rho^{\theta}_{\eps} - \langle \psi \rho^{\theta}_{\eps} \rangle)) \gg  - \ll \DIV((u_{S,\eps}\cdot \nabla) \calB_{\Omega_2}[\psi \rho^{\theta}_{\eps} - \langle \psi \rho^{\theta}_{\eps}\rangle]) \gg_{\calS_{\eps}(t)} 
 = \ll \DIV(\psi \rho^{\theta}_{\eps} (u_{\eps}- u_{S,\eps})  \gg  \\
+ \ll (\theta -1 ) \psi \rho_{\eps}^{\theta} \DIV u_{\eps} - \langle  (\theta -1 ) \psi \rho_{\eps}^{\theta} \DIV u_{\eps} \rangle \gg 
- \ll \rho_{\eps} \nabla \psi \cdot u_{\eps} - \langle \rho_{\eps} \nabla \psi \cdot u_{\eps} \rangle  \gg  = 0,
\end{gather*}
because any term is zero. We deduce that 
\begin{align*}
B_{\eps}[ & - \nabla \eta_{\eps} \cdot \calB_{\Omega_2}[\partial_{t}(\psi \rho^{\theta}_{\eps}- \langle \psi \rho^{\theta}_{\eps} \rangle)])  + \DIV((1-\eta_{\eps})\calB_{\Omega_2}[\partial_{t}(\psi \rho^{\theta}_{\eps}- \langle \psi \rho^{\theta}_{\eps} \rangle)]) - \ll\DIV(\calB_{\Omega_2}[\partial_{t}(\psi \rho^{\theta}_{\eps}- \langle \psi \rho^{\theta}_{\eps} \rangle)]) \gg ] \\ & \, + B_{\eps}[ \DIV((1-\eta_{\eps})(u_{S,\eps}\cdot \nabla)  \calB_{\Omega_2}[g]) - \ll \DIV((u_{S,\eps}\cdot \nabla) \calB_{\Omega_2}[g]) \gg_{\calS_{\eps}(t)}] \\ 
= & \,  B_{\eps}\Big[  -\DIV ((1-\eta_{\eps})\psi \rho_{\eps}^{\theta} (u_{\eps}-u_{S,\eps} ))  
- \nabla \eta_{\eps} \psi \rho_{\eps}^{\theta} u_{\eps}\\ &
 -(1-\eta_{\eps})((\theta -1 ) \psi \rho_{\eps}^{\theta} \DIV u_{\eps} - \langle (\theta -1 ) \psi \rho_{\eps}^{\theta} \DIV u_{\eps} \rangle) 
+ (1-\eta_{\eps})(  \rho_{\eps} \nabla \psi \cdot u_{\eps}  - \langle \rho_{\eps} \nabla \psi \cdot u_{\eps} \rangle)  \\ &
-\nabla \eta_{\eps} \cdot (u_{S,\eps}\cdot \nabla)  \calB_{\Omega_2}[\psi \rho^{\theta}_{\eps}- \langle \psi \rho^{\theta}_{\eps} \rangle]) 
+ (1-\eta_{\eps})U_\eps : \nabla  \calB_{\Omega_2}[\psi \rho^{\theta}_{\eps}- \langle \psi \rho^{\theta}_{\eps} \rangle]   + \psi \rho^{\theta}_{\eps} \nabla \eta_{\eps} \cdot u_{S,\eps}
\Big].
\end{align*}
In a compact way, we can write
\begin{equation*}
\partial_t \calB_{\eps}[\psi \rho^{\theta}_{\eps}- \langle \psi \rho^{\theta}_{\eps} \rangle ] = \sum_{i = 1}^{11} n_i,
\end{equation*}
where
\begin{gather*}
n_1 =  \eta_{\eps} \calB_{\Omega_2}\left[ \partial_t(\psi \rho^{\theta}_{\eps}- \langle \psi \rho^{\theta}_{\eps} \rangle) ]\right], \quad 
n_2 = - B_{\eps}[\nabla \eta_{\eps} \cdot \calB_{\Omega_2}[\partial_{t}(\psi \rho^{\theta}_{\eps}- \langle \psi \rho^{\theta}_{\eps} \rangle)]) - \lll \nabla \eta_{\eps} \cdot \calB_{\Omega_2}[\partial_{t}(\psi \rho^{\theta}_{\eps}- \langle \psi \rho^{\theta}_{\eps} \rangle)]) \ggg ], \\
n_3 =  - B_{\eps}[\DIV ((1-\eta_{\eps})\psi \rho_{\eps}^{\theta} (u_{\eps}-u_{S,\eps} ))], \quad
n_4 = - B_{\eps}[\nabla \eta_{\eps} \cdot u_{\eps}\psi \rho_{\eps}^{\theta} - \lll  \nabla \eta_{\eps} \cdot u_{\eps}\psi \rho_{\eps}^{\theta} \ggg ], \\
n_5 = - B_{\eps}[(1-\eta_{\eps})((\theta -1 ) \psi \rho_{\eps}^{\theta} \DIV u_{\eps} - \langle (\theta -1 ) \psi \rho_{\eps}^{\theta} \DIV u_{\eps} \rangle) - \lll (1-\eta_{\eps})((\theta -1 ) \psi \rho_{\eps}^{\theta} \DIV u_{\eps} - \langle (\theta -1 ) \psi \rho_{\eps}^{\theta} \DIV u_{\eps} \rangle) \ggg], \\
n_6 =  B_{\eps}[(1-\eta_{\eps})(  \rho_{\eps} \nabla \psi \cdot u_{\eps}  - \langle \rho_{\eps} \nabla \psi \cdot u_{\eps} \rangle) - \lll (1-\eta_{\eps})(  \rho_{\eps} \nabla \psi \cdot u_{\eps}  - \langle \rho_{\eps} \nabla \psi \cdot u_{\eps} \rangle) \ggg], \\
n_7 = - (u_{S,\eps}\cdot \nabla)B_{\eps}[(1-\eta_{\eps})(\psi \rho^{\theta}_{\eps}-\langle \psi \rho^{\theta}_{\eps} \rangle )- \lll (1-\eta_{\eps})(\psi \rho^{\theta}_{\eps}-\langle \psi \rho^{\theta}_{\eps} \rangle ) \ggg],  \\
n_8 = (u_{S, \eps} \cdot \nabla)B_{\eps}[\nabla \eta_{\eps}\cdot \calB_{\Omega_2}[\psi \rho^{\theta}_{\eps} - \langle \psi \rho^{\theta}_{\eps} \rangle ]- \lll \nabla \eta_{\eps}\cdot \calB_{\Omega_2}[\psi \rho^{\theta}_{\eps} - \langle \psi \rho^{\theta}_{\eps} \rangle ] \ggg], \\
n_9 = B_{\eps}[\psi \rho^{\theta}_{\eps} \nabla \eta_{\eps} \cdot u_{S,\eps} - \lll \psi \rho^{\theta}_{\eps} \nabla \eta_{\eps} \cdot u_{S,\eps}  \ggg ],  \\
n_{10} =  B_{\eps}[(1-\eta_{\eps})U_\eps : \nabla  \calB_{\Omega_2}[\psi \rho^{\theta}_{\eps}- \langle \psi \rho^{\theta}_{\eps} \rangle ]- \lll (1-\eta_{\eps})U_\eps : \nabla  \calB_{\Omega_2}[\psi \rho^{\theta}_{\eps}- \langle \psi \rho^{\theta}_{\eps} \rangle ] \ggg] \\
n_{11} = B_{\eps}[-\nabla \eta_{\eps} \cdot (u_{S,\eps}\cdot \nabla)  \calB_{\Omega_2}[\psi \rho^{\theta}_{\eps} - \langle \psi \rho^{\theta}_{\eps} \rangle ])-\lll -\nabla \eta_{\eps} \cdot (u_{S,\eps}\cdot \nabla)  \calB_{\Omega_2}[\psi \rho^{\theta}_{\eps} - \langle \psi \rho^{\theta}_{\eps} \rangle ]) \ggg], \\ \text{ and } 
n_{12} = - (u_{S,\eps}\cdot \nabla)\eta_{\eps} \cdot \calB_{\Omega_2}[\psi \rho^{\theta}_{\eps}- \langle \psi \rho^{\theta}_{\eps} \rangle - \lll \psi \rho^{\theta}_{\eps}- \langle \psi \rho^{\theta}_{\eps} \rangle \ggg ],
\end{gather*}
and $ \lll . \ggg $ denotes the average on $ B_{\eps}(h_{\eps}(t)) \setminus \calS_{\eps}(t) $.
We have that 
\begin{equation*}
J_1 = - \int_{0}^{T} \int_{\F_{\eps}(t)}  \phi \psi \rho_{\eps}  u_{\eps}   \phi \partial_{t} \calB_{\eps}^{t}[\psi \rho_{\eps}^{\theta} - \langle \psi \rho_{\eps}^{\theta} \rangle ] = - \int_{0}^{T} \int_{\F_{\eps}(t)}  \psi \rho_{\eps}  u_{\eps}   \phi \sum_{i=1}^{12} n_i = - \sum_{i=1}^{12} B_i, 
\end{equation*}
with
\begin{equation*}
B_i = \int_{0}^{T} \int_{\F_{\eps}(t)}  \sqrt{\phi} \psi \rho_{\eps}  u_{\eps}   \sqrt{\phi} n_i.
\end{equation*}
We now estimate the terms $ B_i $ separately. Denote by $\tilde{B}_{\eps}[f] = B_{\eps}[f- \lll f \ggg]$. Recall that 
\begin{equation*}
\|\sqrt{\phi} \psi \rho_{\eps} u_{\eps} \|_{L^{\frac{6\gamma}{3\gamma+4}}\left(0,T;L^{\frac{6\gamma}{\gamma+4}}(\F_{\eps}(t))\right)} \leq \| \sqrt{\phi} \psi  \rho_{\eps}\|_{L^{3\gamma/2}\left(0,T;L^{3\gamma/2}(\F_{\eps}(t))\right)}\|u_{\eps}\|_{L^{2}\left(0,T;L^{6}(\F_{\eps}(t))\right)}
\end{equation*}
and note that
\begin{align*}
|B_{i}| \leq \|\sqrt{\phi} \psi \rho_{\eps} u_{\eps}\|_{L^{\frac{6\gamma}{3\gamma+4}}\left(0,T;L^{\frac{6\gamma}{\gamma+4}}(\F_{\eps}(t))\right)}\|\sqrt{\phi} n_i\|_{L^{\frac{6\gamma}{3\gamma-4}}\left(0,T;L^{\frac{6\gamma}{5\gamma-4}}(\F_{\eps}(t))\right)}.
\end{align*}
It remains to estimates $ \sqrt{\phi} n_i $ for $ i = 1,...12$.
Let us start with $i=12$.
\begin{align*}
\|\sqrt{\phi} n_{12}\|_{L^{6\gamma/(5\gamma-4)}(\F_{\eps}(t))} \leq & \, (|\ell_{\eps}| + \eps |\omega_{\eps}|)\|\nabla \eta_{\eps}\|_{L^{6\gamma/(5\gamma-4)}(\F_{\eps}(t))} \| \sqrt{\phi}\overline{\calB_{\Omega_2}}[\psi \rho^{\theta}_{\eps}- \langle \psi \rho^{\theta}_{\eps} \rangle ] \|_{L^{\infty}(\F_{\eps}(t))} \\
\leq & \, (|\ell_{\eps}| + \eps |\omega_{\eps}|) \eps^{(3\gamma-4)/2\gamma} \| \sqrt{\phi}  \psi \rho^{\theta}\|_{L^{3}(\F_{\eps}(t))},
\end{align*}
{
we apply Brezis- Bourgain theorem, (see also Remark III.3.7, \cite{Gal}),  properties of Bogovskii operator \cite{Gal} and from the behavior of $\eta$ : $\|\eta\|_p \approx \epsilon ^{\frac{3-p}{p}}$.}
We deduce
\begin{align*}
|B_{12}| \leq C \| \phi^{1/{2\theta}} \psi^{1/\theta} \rho_{\eps}\|_{L^{3\gamma/2}(0,T;L^{3\gamma/2}(\F_{\eps}(t)))}^{\theta}.
\end{align*}

$i=11$

We have
\begin{align*}
\|\sqrt{\phi} n_{11} \|_{L^{6\gamma/(5\gamma-4)}(\F_{\eps}(t))} = & \, \|\phi \tilde{B}_{\eps}[-\nabla \eta_{\eps} \cdot (u_{S,\eps}\cdot \nabla)  \calB_{\Omega_2}[\psi \rho^{\theta}_{\eps} - \langle \psi \rho^{\theta}_{\eps} \rangle ])] \|_{L^{6\gamma/(5\gamma-4)}(\F_{\eps}(t))} \\
\leq & \, \eps \|\sqrt{\phi} \tilde{B}_{\eps}[-\nabla \eta_{\eps} \cdot (u_{S,\eps}\cdot \nabla)  \calB_{\Omega_2}[\psi \rho^{\theta}_{\eps} - \langle \psi \rho^{\theta}_{\eps} \rangle ])] \|_{L^{6\gamma/(3\gamma-4)}(\F_{\eps}(t))} \\
\leq & \, \eps \| \sqrt{\phi} \nabla \eta_{\eps} \cdot (u_{S,\eps}\cdot \nabla)  \calB_{\Omega_2}[\psi \rho^{\theta}_{\eps} - \langle \psi \rho^{\theta}_{\eps} \rangle ]) \|_{L^{6\gamma/(5\gamma-4)}(\F_{\eps}(t))} \\
\leq & \, (|\ell_{\eps}|+\eps|\omega_{\eps}|)\eps^{1+ \frac{\gamma  - 4}{2\gamma}}\| \sqrt{\phi} \nabla \calB_{\Omega_2}[\psi \rho^{\theta}_{\eps} - \langle \psi \rho^{\theta}_{\eps} \rangle ]) \|_{L^{3}(\F_{\eps}(t))} \\
\leq & \, (|\ell_{\eps}|+\eps|\omega_{\eps}|)\eps^{(3\gamma  - 4)/2\gamma}\| \sqrt{\phi} \psi \rho^{\theta}_{\eps} \|_{L^{3}(\F_{\eps}(t))}.
\end{align*}
{ where we apply the behavior of the rigid velocity,  the cut off function $\eta$ and Remark 3.19, $(3.3.35)_3$ \cite {NovS}.}
\begin{align*}
|B_{11}| \leq C \| \phi^{1/{2\theta}} \psi^{1/\theta} \rho_{\eps}\|_{L^{3\gamma/2}(0,T;L^{3\gamma/2}(\F_{\eps}(t)))}^{\theta}.
\end{align*}
Similarly we have 
\begin{align*}
\|\sqrt{\phi} n_{10} \|_{L^{6\gamma/(5\gamma-4)}(\F_{\eps}(t))}  = & \, \|\sqrt{\phi} \tilde{B}_{\eps}[(1-\eta_{\eps})U_\eps : \nabla  \calB_{\Omega_2}[\psi \rho^{\theta}_{\eps}- \langle \psi \rho^{\theta}_{\eps} \rangle ]] \|_{L^{6\gamma/(5\gamma-4)}(\F_{\eps}(t))}  \\
\leq & \, \eps \|\sqrt{\phi} \tilde{B}_{\eps}[(1-\eta_{\eps})U_\eps : \nabla  \calB_{\Omega_2}[\psi \rho^{\theta}_{\eps}- \langle \psi \rho^{\theta}_{\eps} \rangle ]] \|_{L^{6\gamma/(3\gamma-4)}(\F_{\eps}(t))} \\
\leq & \, \eps \|\sqrt{\phi} (1-\eta_{\eps}) U_\eps : \nabla   \calB_{\Omega_2}[\psi \rho^{\theta}_{\eps}- \langle \psi \rho^{\theta}_{\eps} \rangle ] \|_{L^{6\gamma/(5\gamma-4)}(\F_{\eps}(t))} \\
\leq & \, \eps|\omega_{\eps}|\eps^{3\frac{3\gamma-4}{6\gamma}}\| \sqrt{\phi}\nabla   \calB_{\Omega_2}[\psi \rho^{\theta}_{\eps}- \langle \psi \rho^{\theta}_{\eps} \rangle ]\|_{L^{3}(\F_{\eps}(t))} \\
\leq & \, \eps|\omega_{\eps}|\eps^{3\gamma-6/2\gamma}\| \sqrt{\phi} \psi \rho^{\theta}_{\eps} \|_{L^{3}(\F_{\eps}(t))}.
\end{align*}
We deduce
\begin{align*}
|B_{10}| \leq C \| \phi^{1/{2\theta}} \psi^{1/\theta}\rho_{\eps}\|_{L^{3\gamma/2}(0,T;L^{3\gamma/2}(\F_{\eps}(t)))}^{\theta}.
\end{align*}
We have 
\begin{align*}
\|\sqrt{\phi} n_{9} \|_{L^{6\gamma/(5\gamma-4)}(\F_{\eps}(t))}  = & \, \|\sqrt{\phi} \tilde{B}_{\eps}[(\psi \rho^{\theta}_{\eps}- \langle \psi \rho^{\theta}_{\eps} \rangle  ) \nabla \eta_{\eps} \cdot u_{S,\eps}  ] \|_{L^{6\gamma/(5\gamma-4)}(\F_{\eps}(t))} \\
\leq & \, \eps \| \sqrt{\phi} (\psi \rho^{\theta}_{\eps}- \langle \psi \rho^{\theta}_{\eps} \rangle  ) \nabla \eta_{\eps} \cdot u_{S,\eps} \|_{L^{6\gamma/(5\gamma-4)}(\F_{\eps}(t))} \\
\leq & \, (|\ell_{\eps}| + \eps |\omega_{\eps}| )\eps^{1+\frac{\gamma-4}{2\gamma}} \|\sqrt{\phi} \psi \rho^{\theta}_{\eps} \|_{L^{3}(\F_{\eps}(t))}.
\end{align*}
We deduce
\begin{align*}
|B_{9}| \leq C \| \phi^{1/{2\theta}} \psi^{1/\theta}\rho_{\eps}\|_{L^{3\gamma/2}(0,T;L^{3\gamma/2}(\F_{\eps}(t)))}^{\theta}.
\end{align*}
We have
\begin{align*}
\|\sqrt{\phi} n_{8} \|_{L^{6\gamma/(5\gamma-4)}(\F_{\eps}(t))} \leq & \, \| \sqrt{\phi}  (u_{S, \eps} \cdot \nabla)\tilde{B}_{\eps}[\nabla \eta_{\eps}\cdot \calB_{\Omega_2}[\psi \rho^{\theta}_{\eps} - \langle \psi \rho^{\theta}_{\eps} \rangle ]]  \|_{L^{6\gamma/(5\gamma-4)}(\F_{\eps}(t))} \\
\leq & \, (|\ell_{\eps}| + \eps |\omega_{\eps}|)\| \sqrt{\phi} \nabla \eta_{\eps}\cdot \calB_{\Omega_2}[\psi \rho^{\theta}_{\eps} - \langle \psi \rho^{\theta}_{\eps} \rangle  \|_{L^{6\gamma/(5\gamma-4)}(\F_{\eps}(t))} \\
\leq & \,  (|\ell_{\eps}| + \eps |\omega_{\eps}|)\| \nabla \eta_{\eps} \|_{L^{6\gamma/(5\gamma-4)}(\F_{\eps}(t))}\| \sqrt{\phi} \calB_{\Omega_2}[\psi \rho^{\theta}_{\eps} - \langle \psi \rho^{\theta}_{\eps} \rangle \|_{L^{\infty(\F_{\eps})}} \\
\leq & \, (|\ell_{\eps}|+\eps|\omega_{\eps}|)\eps^{(3\gamma  - 4)/2\gamma}\| \sqrt{\phi} \psi \rho^{\theta}_{\eps} \|_{L^{3}(\F_{\eps}(t))}.
\end{align*}
We deduce
\begin{align*}
|B_{8}| \leq C \| \phi^{1/{2\theta}} \psi^{1/\theta} \rho_{\eps}\|_{L^{3\gamma/2}(0,T;L^{3\gamma/2}(\F_{\eps}(t)))}^{\theta}.
\end{align*}
We have
\begin{align*}
\|\sqrt{\phi} n_{7} \|_{L^{6\gamma/(5\gamma-4)}(\F_{\eps}(t))} \leq & \, \|\sqrt{\phi} (u_{S,\eps}\cdot \nabla)\tilde{B}_{\eps}[(1-\eta_{\eps})(\psi \rho^{\theta}_{\eps}-\langle \psi \rho^{\theta}_{\eps} \rangle )] \|_{L^{6\gamma/(5\gamma-4)}(\F_{\eps}(t))} \\
\leq & \, (|\ell_{\eps}|+\eps|\omega_{\eps}|)\|\sqrt{\phi}(1-\eta_{\eps})(\psi \rho^{\theta}_{\eps}-\langle \psi \rho^{\theta}_{\eps} \rangle ) \|_{L^{6\gamma/(5\gamma-4)}(\F_{\eps}(t))} \\
\leq & \, (|\ell_{\eps}|+\eps|\omega_{\eps}|)\eps^{3\frac{3\gamma  - 4}{6\gamma}}\| \sqrt{\phi} \psi \rho^{\theta}_{\eps} \|_{L^{3}(\F_{\eps}(t))}.
\end{align*}
We deduce
\begin{align*}
|B_{7}| \leq C \| \phi^{1/{2\theta}}\psi^{1/\theta} \rho_{\eps}\|_{L^{3\gamma/2}(0,T;L^{3\gamma/2}(\F_{\eps}(t)))}^{\theta}.
\end{align*}
Recall that 
\begin{equation*}
\|\rho_{\eps} u_{\eps} \|_{L^{2}(0,T;L^{3}(\F_{\eps}(t)))} \leq C,
\end{equation*}
follows from the energy estimate and { $ \gamma \geq 6$.} This allows us to estimate 
\begin{equation*}
|B_{6}| \leq \|\rho_{\eps} u_{\eps} \|_{L^{\infty}(0,T;L^{3}(\F_{\eps}(t)))}\| \phi n_6 \|_{L^{2}(0,T;L^{3/2}(\F_{\eps}(t)))}.
\end{equation*} 
We have
\begin{align*}
\|\phi n_{6} \|_{L^{3/2}(\F_{\eps}(t))} \leq & \, \|\phi \tilde{B}_{\eps}[(1-\eta_{\eps})(  \rho_{\eps} \nabla \psi \cdot u_{\eps}  - \langle \rho_{\eps} \nabla \psi \cdot u_{\eps} \rangle)]\|_{L^{3/2}(\F_{\eps}(t))}  \\
\leq & \, \eps^{2/5} \|\phi \tilde{B}_{\eps}[(1-\eta_{\eps})(  \rho_{\eps} \nabla \psi \cdot u_{\eps}  - \langle \rho_{\eps} \nabla \psi \cdot u_{\eps} \rangle)]\|_{ L^{2}(\F_{\eps}(t))}  \\ 
\leq & \, \eps^{2/5}\|\phi (1-\eta_{\eps})(  \rho_{\eps} \nabla \psi \cdot u_{\eps}  - \langle \rho_{\eps} \nabla \psi \cdot u_{\eps} \rangle) \|_{L^{6/5}(\F_{\eps}(t))}  \\
\leq & \eps^{2/5} \, \| u_{\eps}\|_{L^{6}(\F_{\eps}(t))}\| \phi \rho^{\theta}_{\eps} \|_{L^{\frac{3}{2}}(\F_{\eps}(t))}.
\end{align*}
%
We deduce that for $ \theta \leq 2\gamma/3 $
\begin{align*}
|B_{6}| \leq C \| \rho_{\eps}\|_{L^{\infty}(0,T;L^{\gamma}(\F_{\eps}(t)))}^{\theta}.
\end{align*}
Recall that by energy estimates we have
\begin{equation*}
\|\rho_{\eps} u_{\eps} \|_{L^{2}(0,T;L^{\frac{6\gamma}{6+\gamma}}(\F_{\eps}(t)))} \leq C \quad \text{ and } \quad \|\rho_{\eps} u_{\eps} \|_{L^{\infty}(0,T;L^{2\gamma/(\gamma+1)}(\F_{\eps}(t)))} \leq C.
\end{equation*}
Using { $ \gamma \geq 6 $} and interpolation we have 
\begin{equation*}
\|\rho_{\eps} u_{\eps} \|_{L^{6}(0,T;L^{2}(\F_{\eps}(t)))} \leq \|\rho_{\eps} u_{\eps} \|_{L^{2}(0,T;L^{3}(\F_{\eps}(t)))}^{1/3}
\|\rho_{\eps} u_{\eps} \|_{L^{\infty}(0,T;L^{12/7}(\F_{\eps}(t)))}^{2/3}.
\end{equation*}
\begin{equation*}
|B_{5}| \leq \|\rho_{\eps} u_{\eps} \|_{L^{6}(0,T;L^{2}(\F_{\eps}(t)))}\| \phi n_5 \|_{L^{6/5}(0,T;L^{2}(\F_{\eps}(t)))}.
\end{equation*} 
\begin{align*}
\|\phi n_{5} \|_{L^{2}(\F_{\eps}(t))} \leq & \, \| \phi \tilde{B}_{\eps}[(1-\eta_{\eps})((\theta -1 ) \psi \rho_{\eps}^{\theta} \DIV u_{\eps} - \langle (\theta -1 ) \psi \rho_{\eps}^{\theta} \DIV u_{\eps} \rangle)]
\|_{L^{2}(\F_{\eps}(t))}  \\
\leq & \, \| (1-\eta_{\eps})((\theta -1 ) \psi \rho_{\eps}^{\theta} \DIV u_{\eps} \|_{L^{6/5}(\F_{\eps}(t))}  \\
\leq & \, \| \DIV(u_{\eps})\|_{L^{2}(\F_{\eps}(t))} \| \phi \rho^{\theta}_{\eps} \|_{L^{3}(\F_{\eps}(t))}.
\end{align*}
We deduce
\begin{align*}
|B_{5}| \leq C \| \phi^{1/{\theta}} \rho_{\eps}\|_{L^{3\gamma/2}(0,T;L^{3\gamma/2}(\F_{\eps}(t)))}^{\theta}.
\end{align*}
We have
\begin{equation*}
|B_{4}| \leq \|\rho_{\eps} u_{\eps} \|_{L^{2}(0,T;L^{6\gamma/(6+\gamma)}(\F_{\eps}(t)))}\| \phi n_4 \|_{L^{2}(0,T;L^{6\gamma/(5\gamma-6)}(\F_{\eps}(t)))}.
\end{equation*} 
Moreover
\begin{align*}
\|\phi n_{4} \|_{L^{6\gamma/(5\gamma-6)}(\F_{\eps}(t))} \leq & \, \| \phi 
\tilde{B}_{\eps}[\nabla \eta_{\eps} \cdot u_{\eps}\psi \rho_{\eps}^{\theta} ] \|_{L^{6\gamma/(5\gamma-6)}(\F_{\eps}(t))}  \\
\leq & \, \eps \| \phi 
\tilde{B}_{\eps}[\nabla \eta_{\eps} \cdot u_{\eps}\psi \rho_{\eps}^{\theta} ]\|_{L^{2\gamma/(\gamma-2)}(\F_{\eps}(t))} \\
\leq & \,  \eps \| \phi \nabla \eta_{\eps} \cdot u_{\eps}\psi \rho_{\eps}^{\theta}  \|_{L^{6\gamma/(5\gamma-6)}(\F_{\eps}(t))}  \\
\leq & \, \| u_{\eps} \|_{L^{6}(\F_{\eps}(t))}\|\psi \rho_{\eps}^{\theta} \|_{L^{3\gamma/(2\gamma-3)}(\F_{\eps}(t))}.
\end{align*}
We deduce, for $ \theta < 2\gamma/3-1 $
\begin{align*}
|B_{4}| \leq C \| \rho_{\eps}\|_{L^{\infty}(0,T;L^{\gamma}(\F_{\eps}(t)))}^{\theta}.
\end{align*}
In $ B_3 $ there is the term $ u_{\eps}-u_{\eps,S} $ ans $ u_{\eps} $ and $ u_{\eps,S} $ have different integrability in time. To prove the estimate we introduce a smooth cut-off $ \chi_{\eps}: B_{\eps}(h_{\eps}(t))\setminus\calS_{\eps}(t) \to [0,1]$  such that
\begin{equation} \label{cuttoff}
\begin{array}{l}
\chi_{\eps} = 1  \mbox{ in an open neighborhood of } \partial \calS_{\eps}(t) ,\\ \\
 \chi_{\eps} = 0 \mbox { in an open neighborhood of } \partial B_{\eps}(h_{\eps}(t)) \\ \\ 
|{\mbox{supp}}(\chi_{\eps})| \leq \eps^{(3\gamma-4)/(\gamma-2)}.

\end{array}
\end{equation}
We have
\begin{align*}
n_3 = & \, - B_{\eps}[\DIV ((1-\eta_{\eps})\psi \rho_{\eps}^{\theta} (u_{\eps}-u_{S,\eps} ))]  \\
= & \, - B_{\eps}[\DIV ((1-\eta_{\eps}-\chi_{\eps})\psi \rho_{\eps}^{\theta} u_{\eps})]+B_{\eps}[\DIV ((1-\eta_{\eps}-\chi_{\eps})\psi \rho_{\eps}^{\theta} u_{S,\eps} )] {+} B_{\eps}[\DIV (\chi_{\eps}\psi \rho_{\eps}^{\theta} (u_{\eps}-u_{S,\eps} ))] \\
= & \, n_3^1+n_3^2+n_3^3.
\end{align*}  
It holds 
\begin{align*}
|B_3| \leq & \, \| \rho_{\eps} u_{\eps}\|_{L^{2}(0,T;L^{6\gamma/(\gamma+6)}(\F_{\eps}(t)))}\left(\|\phi n_{3}^1 + \phi n_{3}^3\|_{L^{2}(0,T;L^{6\gamma/(5\gamma-6)}(\F_{\eps}(t)))}\right) \\
& \, \|\sqrt{\phi} \psi \rho_{\eps} u_{\eps}\|_{L^{\frac{6\gamma}{3\gamma+4}}\left(0,T;L^{\frac{6\gamma}{\gamma+4}}(\F_{\eps}(t))\right)}\|\sqrt{\phi} n_3^2\|_{L^{\frac{6\gamma}{3\gamma-4}}\left(0,T;L^{\frac{6\gamma}{5\gamma-4}}(\F_{\eps}(t))\right)}.
\end{align*}
Moreover
\begin{align*}
\|\phi n_{3}^1 \|_{L^{2}(0,T;L^{6\gamma/(5\gamma-6)}(\F_{\eps(t)}))} \leq  & \, \| \phi \psi  \rho_{\eps}^{\theta}u_{\eps} \|_{L^{2}(0,T;L^{6\gamma/(5\gamma-6)}(\F_{\eps}(t)))} \\
\leq & \, \|u_{\eps}\|_{_{L^{2}(0,T;L^{6}(\F_{\eps}(t)))}}\|\rho_{\eps}\|_{L^{\infty}(0,T;L^{\gamma}(\F_{\eps}(t)))}^{\theta},
\end{align*}
which holds for $ \theta \leq 2\gamma/3-1 $.
\begin{align*}
\|\phi n_{3}^3 \|_{L^{2}(0,T;L^{6\gamma/(5\gamma-6)}(\F_{\eps(t)}))} \leq  & \, \| \phi \psi  \chi_{\eps}\rho_{\eps}^{\theta}(u_{\eps}-u_{s,\eps}) \|_{L^{2}(0,T;L^{6\gamma/(5\gamma-6)}(\F_{\eps(t)}))} \\
\leq & \, \|u_{\eps}\|_{_{L^{2}(0,T;L^{6}(\F_{\eps}(t)))}}\|\rho_{\eps}\|_{L^{\infty}(0,T;L^{\gamma}(\F_{\eps}(t)))} \\ & + (\|\ell_{\eps}\|_{L^{\infty}(0,T)}+\eps\|\omega_{\eps}\|_{L^{\infty}(0,T)}\|\chi_{\eps}\|_{L^{\infty}(0,T;L^{2\gamma/(\gamma-2)}(\F_{\eps}(t)))}\| \phi \psi \rho^{\theta}\|_{L^{2}(0,T;L^{3}(\F_{\eps}(t)))}\\
\leq & \, \|u_{\eps}\|_{_{L^{2}(0,T;L^{6}(\F_{\eps}(t)))}}\|\rho_{\eps}\|_{L^{\infty}(0,T;L^{\gamma}(\F_{\eps}(t)))} \\
 & \, + (\|\ell_{\eps}\|_{L^{\infty}(0,T)}+\eps\|\omega_{\eps}\|_{L^{\infty}(0,T)})\eps^{\frac{3\gamma-4}{\gamma-2}\frac{\gamma-2}{2\gamma}}\|\phi \psi \rho_{\eps}^{\theta}\|_{L^{2}(0,T;L^{3}(\F_{\eps}(t)))}.
\end{align*}
\begin{align*}
\|\sqrt{\phi} n_3^2 \|_{L^{\frac{6\gamma}{5\gamma-4}}(\F_{\eps}(t))} \leq & \, \|\sqrt{\phi}(1-\eta_{\eps}-\chi_{\eps}) \psi \rho_{\eps}^{\theta} u_{S,\eps}  \|_{L^{\frac{6\gamma}{5\gamma-4}}(\F_{\eps}(t))} \\
\leq & \,(|\ell_{\eps}|+\eps|\omega_{\eps}|) \eps^{(3\gamma-4)/2\gamma} \| \sqrt{\phi} \psi \rho_{\eps}^{\theta} \|_{L^{3}(\F_{\eps}(t))}.
\end{align*}
We deduce
\begin{align*}
|B_3| \leq C\left( \| \rho_{\eps}\|_{L^{\infty}(0,T;L^{\gamma}(\F_{\eps}(t)))}^{\theta} +   \| \phi^{1/{2\theta}} \psi^{1/\theta}\rho_{\eps}\|_{L^{3\gamma/2}(0,T;L^{3\gamma/2}(\F_{\eps}(t)))}^{\theta}   \right).
\end{align*}
It is well-known that for $ p < 6 $ 
\begin{align}
\label{gen:est}
\|\phi  \calB_{\Omega_2}[ \partial_t(\psi \rho^{\theta}_{\eps}- \langle \psi \rho^{\theta}_{\eps} \rangle) ] \|_{L^p(\F_{\eps}(t))} \leq  \left( \|u_{\eps} \|_{L^{6}(\F_{\eps}(t))}  + \|\DIV(u_{\eps})\|_{L^{2}(\F_{\eps}(t))}  \right)\| \phi \rho_{\eps}^{\theta}\|_{L^{6p/(6-p)}(\F_{\eps}(t))}.
\end{align}
Then 
\begin{align*}
|B_{2}| \leq \|\rho_{\eps} u_{\eps} \|_{L^{6}(0,T;L^{2}(\F_{\eps}(t)))}\| \phi n_2 \|_{L^{6/5}(0,T;L^{2}(\F_{\eps}(t)))}.
\end{align*}
Moreover
\begin{align*}
\|\phi n_{2} \|_{L^{2}(\F_{\eps}(t))} \leq & \, \| \phi 
 \tilde{B}_{\eps}[\nabla \eta_{\eps} \cdot \calB_{\Omega_2}[\partial_{t}(\psi \rho^{\theta}_{\eps}- \langle \psi \rho^{\theta}_{\eps} \rangle)]) ] \|_{L^{2}(\F_{\eps}(t))}  \\
 \leq & \, \eps \| \phi 
 \tilde{B}_{\eps}[\nabla \eta_{\eps} \cdot \calB_{\Omega_2}[\partial_{t}(\psi \rho^{\theta}_{\eps}- \langle \psi \rho^{\theta}_{\eps} \rangle)]) ]\|_{L^{6}(\F_{\eps}(t))} \\
 \leq & \,  \eps \| \phi \nabla \eta_{\eps} \cdot \calB_{\Omega_2}[\partial_{t}(\psi \rho^{\theta}_{\eps}- \langle \psi \rho^{\theta}_{\eps} \rangle)]) \|_{L^{2}(\F_{\eps}(t))}  \\
\leq & \, \left( \| u_{\eps}\|_{L^{6}(\F_{\eps}(t))} + \|\DIV(u_{\eps})\|_{L^{2}(\F_{\eps}(t))} \right)\| \phi \psi\rho_{\eps}^{\theta} \|_{L^{3}(\F_{\eps}(t))},
\end{align*}
where we use \eqref{gen:est} with $ p = 2 $. We deduce
\begin{align*}
|B_{2}| \leq C \| \phi^{2/\gamma} \psi^{2/\gamma}\rho_{\eps}\|_{L^{3\gamma/2}(0,T;L^{3\gamma/2}(\F_{\eps}(t)))}^{\theta}.
\end{align*}
We have  
\begin{align*}
|B_{1}| \leq & \, \|\rho_{\eps} u_{\eps} \|_{L^{6}(0,T;L^{2}(\F_{\eps}(t)))}\| \phi n_1 \|_{L^{6/5}(0,T;L^{2}(\F_{\eps}(t)))} \\
\leq & \, \|\rho_{\eps} u_{\eps} \|_{L^{6}(0,T;L^{2}(\F_{\eps}(t)))}\| \phi  \eta_{\eps} \calB_{\Omega_2}\left[ \partial_t(\psi \rho^{\theta}_{\eps}- \langle \psi \rho^{\theta}_{\eps} \rangle) ]\right] \|_{L^{6/5}(0,T;L^{2}(\F_{\eps}(t)))} \\
\leq & \, \|\rho_{\eps} u_{\eps} \|_{L^{6}(0,T;L^{2}(\F_{\eps}(t)))}\left( \| u_{\eps}\|_{L^2(0,T;L^{6}(\F_{\eps}(t))} + \|\DIV(u_{\eps})\|_{L^{2}((0,T)\times\F_{\eps}(t))} \right)\| \phi \psi \rho_{\eps}^{\theta} \|_{L^{3}((0,T)\times \F_{\eps}(t))} \\
\leq & \, C \| \phi^{2/\gamma} \psi^{2/\gamma}\rho_{\eps}\|_{L^{3\gamma/2}(0,T;L^{3\gamma/2}(\F_{\eps}(t)))}^{\theta}.
\end{align*}
Putting all the estimates together and recalling that $ \theta = \gamma / 2 $ we deduce that \eqref{eq:eq:3} reads
\begin{equation*}
\int_{0}^{T}\int_{\F_{\eps}(t)} \phi \psi^2 \rho_{\eps}^{3\gamma/2} = \sum_{i = 2}^{7} J_i + \sum_{i = 1}^{12} B_i \leq C + \left(\int_{0}^{T}\int_{\F_{\eps}} \phi^{3/2} \psi^{3} \rho_{\eps}^{3\gamma/2}\right)^{1/3+2/3\gamma} 
\end{equation*}
\end{proof}

\subsubsection{Proof of Proposition \ref{PROP:3}}

From Proposition \ref{PROP:1} we deduce that $ \rho_{\eps} $ is uniformly bounded in $ L^{3\gamma/2}((0,t)\times \Omega_1 ) $. We will use this information and the extra hypothesis \eqref{extra:hyp} to show that  $ \rho_{\eps} $ is uniformly bounded in $ L^{\gamma + \theta}((0,t)\times \Omega_1 ) $ for $ \theta $ in $ (\gamma/2, 2\gamma/3-1] $.

\begin{proof}[Proof of Proposition \ref{PROP:3}.]
As in Proposition \ref{PROP:1}, we test the momentum equation with 
\begin{equation*}
\phi \psi \calB_{\eps}^t[ \psi \rho_{\eps}^{\theta}  - \langle   \psi \rho^{\theta} \rangle ] = \phi \psi \calR_{\eps}^t \circ \calB_{\Omega_2} \circ \calE^t [\psi \rho_{\eps}^{\theta} - \langle   \psi \rho^{\theta} \rangle ],
\end{equation*}
for $ \theta  \in (\gamma/2, 2\gamma/3-1] $.
As in Proposition \ref{PROP:1} we deduce that 
\begin{equation*}
\int_{0}^{T}\int_{\F_{\eps}(t)} \phi  \psi^2 \rho^{\gamma + \theta}_{\eps} = \sum_{i = 1}^{8} J_i,
\end{equation*}
Note that the term $ J_2$, $ J_3$, $ J_7 $, $ J_8 $ can be estimates in the same way as Proposition \ref{PROP:1}. We are left with $ J_1$, $ J_4$, $ J_5 $ and $ J_6 $. We start with $ J_8 $
\begin{align*}
\left| \int_{0}^{T}\int_{\F_{\eps}(t)} \phi  \rho^{\gamma}_{\eps} \nabla \psi \calB_{\eps}^t[ \psi \rho_{\eps}^{\theta}  - \langle   \psi \rho^{\theta} \rangle ] \right| \leq & \, \| \phi\rho_{\eps}^{\gamma} \|_{L^{3/2}((0,T)\times \Omega_2)}\| \calB_{\eps}^t[ \psi \rho_{\eps}^{\theta}  - \langle   \psi \rho^{\theta} \rangle ] \|_{L^3(0,T;L^3(\F_{\eps}(t)))} \\
\leq & \, C \| \psi \rho_{\eps}^{\theta}  - \langle   \psi \rho^{\theta} \rangle \|_{L^3(0,T;L^{3/2}(\F_{\eps}(t)))} \\
\leq & \, C \| \rho \|_{L^{\infty}(0,T;L^{\gamma}(\F_{\eps}(t)))}^{\theta},
\end{align*}  
for $ 3 \theta/2 < \gamma $.

Moreover
\begin{align*}
|J_4 + J_5 | \leq & \, 2 \| \nabla u_{\eps}\|_{L^2(0,T;L^{2}(\F_{\eps}(t)))}\| \nabla(\psi \calB_{\eps}^t[\psi \rho_{\eps}^{\theta} - \langle \psi \rho_{\eps}^{\theta} \rangle ])\|_{L^2(0,T;L^{2}(\F_{\eps}(t)))} \\
\leq & \, C \|\psi \rho_{\eps}^{\theta} - \langle \psi \rho_{\eps}^{\theta} \rangle \|_{L^2(0,T;L^{2}(\F_{\eps}(t)))} \\
\leq & \, C \| \psi^{1/{\theta}}\rho_{\eps}\|_{L^{2\theta}(0,T;L^{2\theta}(\F_{\eps}(t)))}^{\theta} \\
\leq & C \| \psi^{1/{\theta}}\rho_{\eps}\|_{L^{3\gamma/2}(0,T;L^{3\gamma/2}(\F_{\eps}(t)))}^{\theta},
\end{align*}
for $ 2 \theta < 3\gamma/2$.
We are left with $ J_1 $.

We show the estimates for $ J_1 $. To this aim we estimates the $ B_i $ terms. It holds
\begin{equation*}
\|\sqrt{\phi} \psi \rho_{\eps} u_{\eps} \|_{L^{\frac{2(\gamma+\theta)}{2 + \gamma+\theta}}\left(0,T;L^{\frac{6(\gamma+\theta)}{6+\gamma+\theta}}(\F_{\eps}(t))\right)} \leq \| \sqrt{\phi} \psi  \rho_{\eps}\|_{L^{\gamma+\theta}\left(0,T;L^{\gamma + \theta}(\F_{\eps}(t))\right)}\|u_{\eps}\|_{L^{2}\left(0,T;L^{6}(\F_{\eps}(t))\right)},
\end{equation*}
we deduce that
\begin{align*}
|B_{i}| \leq  \|\sqrt{\phi} \psi \rho_{\eps} u_{\eps} \|_{L^{\frac{2(\gamma+\theta)}{2 + \gamma+\theta}}\left(0,T;L^{\frac{6(\gamma+\theta)}{6+\gamma+\theta}}(\F_{\eps}(t))\right)} \|\sqrt{\phi} n_i\|_{L^{\frac{2(\gamma+\theta)}{\gamma+\theta - 2}}\left(0,T;L^{\frac{6(\gamma+\theta)}{5(\gamma+\theta)-6}}(\F_{\eps}(t))\right)}.
\end{align*}
We estimate the terms $\sqrt{\phi}n_i $. Let notice that the inequality used for $ n_{12} $ and $ n_8 $, for $ n_{11} $ and $ n_9 $ and for $ n_{10} $, $ n_7 $ and $ n_{3}^2 $ are the same so we only show the estimate for $ n_{12} $, $ n_{11} $ and $ n_{10} $.
It holds that 
 \begin{align*}
\|\sqrt{\phi} n_{12}\|_{L^{\frac{6(\gamma+\theta)}{5(\gamma+\theta)-6}}(\F_{\eps}(t))} \leq & \, (|\ell_{\eps}| + \eps |\omega_{\eps}|)\|\nabla \eta_{\eps}\|_{L^{\frac{6(\gamma+\theta)}{7\gamma+\theta-6}}(\F_{\eps}(t))} \| \sqrt{\phi}\calB_{\Omega_2}[\psi \rho^{\theta}_{\eps}- \langle \psi \rho^{\theta}_{\eps} \rangle ] \|_{L^{\frac{3(\gamma+\theta)}{2\theta-\gamma}}(\F_{\eps}(t))} \\
\leq & \, (|\ell_{\eps}| + \eps |\omega_{\eps}|) \eps^{\frac{5\gamma-\theta-6}{2(\gamma+\theta)}} \| \sqrt{\phi}  \psi \rho^{\theta}_{\eps}\|_{L^{\frac{\gamma+\theta}{\theta}}(\F_{\eps}(t))}
\end{align*}
In the estimate above we use the identity
\begin{gather*}
\frac{2\theta-\gamma}{3(\gamma+\theta)} = \frac{\theta}{\gamma+\theta}-\frac{1}{3} \\
\left( 3- \frac{6(\gamma+\theta)}{7\gamma+\theta-6} \right)\frac{7\gamma+\theta-6}{6(\gamma+\theta)} = \frac{5\gamma-\theta-6}{2(\gamma+\theta)}
\end{gather*}
We deduce
\begin{align*}
|B_{12}| \leq C  (\|\ell_{\eps}\|_{L^{\infty}(0,T)} + \eps |\omega_{\eps}|_{L^{\infty(0,T)}}) \eps^{\frac{5\gamma-\theta-6}{2(\gamma+\theta)}} \| \phi^{1/{2\theta}} \psi^{1/\theta} \rho_{\eps}\|_{L^{\gamma+\theta}(0,T;L^{\gamma+\theta}(\F_{\eps}(t)))}^{\theta}.
\end{align*}
We have
\begin{align*}
\|\sqrt{\phi} n_{11} \|_{L^{\frac{6(\gamma+\theta)}{5(\gamma+\theta)-6}}(\F_{\eps}(t))}  = & \, \|\sqrt{\phi} \tilde{B}_{\eps}[-\nabla \eta_{\eps} \cdot (u_{S,\eps}\cdot \nabla)  \calB_{\Omega_2}[\psi \rho^{\theta}_{\eps} - \langle \psi \rho^{\theta}_{\eps} \rangle ])] \|_{L^{\frac{6(\gamma+\theta)}{5(\gamma+\theta)-6}}(\F_{\eps}(t))} \\
\leq & \, \eps \|\sqrt{\phi} \tilde{B}_{\eps}[-\nabla \eta_{\eps} \cdot (u_{S,\eps}\cdot \nabla)  \calB_{\Omega_2}[\psi \rho^{\theta}_{\eps} - \langle \psi \rho^{\theta}_{\eps} \rangle ])] \|_{L^{\frac{6(\gamma+\theta)}{3\gamma+3\theta-6}}(\F_{\eps}(t))} \\
\leq & \, \eps \| \sqrt{\phi} \nabla \eta_{\eps} \cdot (u_{S,\eps}\cdot \nabla)  \calB_{\Omega_2}[\psi \rho^{\theta}_{\eps} - \langle \psi \rho^{\theta}_{\eps} \rangle ]) \|_{L^{\frac{6(\gamma+\theta)}{5(\gamma+\theta)-6}}(\F_{\eps}(t))} \\
\leq & \, (|\ell_{\eps}|+\eps|\omega_{\eps}|)\eps^{1+ \frac{3\gamma-3\theta  - 6}{2(\gamma+\theta)}}\| \sqrt{\phi} \nabla \calB_{\Omega_2}[\psi \rho^{\theta}_{\eps} - \langle \psi \rho^{\theta}_{\eps} \rangle ]) \|_{L^{\frac{\gamma+\theta}{\theta}}(\F_{\eps}(t))} \\
\leq & \, (|\ell_{\eps}|+\eps|\omega_{\eps}|)\eps^{\frac{5\gamma-\theta-6}{2(\gamma+\theta)}}\| \sqrt{\phi} \psi \rho^{\theta}_{\eps} \|_{L^{\frac{\gamma+\theta}{\theta}}(\F_{\eps}(t))},
\end{align*}
where we used
\begin{gather*}
\frac{5(\gamma+\theta)-6}{6(\gamma+\theta)} = \frac{\theta}{\gamma+\theta}+\frac{5\gamma-\theta-6}{6(\gamma+\theta)}, \\
\left(3- \frac{6(\gamma+\theta)}{5\gamma-\theta-6}\right)\frac{5\gamma-\theta-6}{6(\gamma+\theta)} =  \frac{3\gamma+\theta  - 6}{2(\gamma+\theta)}.
\end{gather*}
We deduce
\begin{align*}
|B_{11}| \leq C  (\|\ell_{\eps}\|_{L^{\infty}(0,T)} + \eps |\omega_{\eps}|_{L^{\infty(0,T)}}) \eps^{\frac{5\gamma-\theta-6}{2(\gamma+\theta)}}  \| \phi^{1/{2\theta}} \psi^{1/\theta} \rho_{\eps}\|_{L^{\gamma+\theta}(0,T;L^{\gamma+\theta}(\F_{\eps}(t)))}^{\theta}.
\end{align*}
Similarly we have 
\begin{align*}
\|\sqrt{\phi} n_{10} \|_{L^{\frac{6(\gamma+\theta)}{5(\gamma+\theta)-6}}(\F_{\eps}(t))}   = & \, \|\sqrt{\phi} \tilde{B}_{\eps}[(1-\eta_{\eps})U_\eps : \nabla  \calB_{\Omega_2}[\psi \rho^{\theta}_{\eps}- \langle \psi \rho^{\theta}_{\eps} \rangle ]] \|_{L^{\frac{6(\gamma+\theta)}{5(\gamma+\theta)-6}}(\F_{\eps}(t))}  \\
\leq & \, \eps \|\sqrt{\phi} \tilde{B}_{\eps}[(1-\eta_{\eps})U_\eps : \nabla  \calB_{\Omega_2}[\psi \rho^{\theta}_{\eps}- \langle \psi \rho^{\theta}_{\eps} \rangle ]] \|_{L^{\frac{6(\gamma+\theta)}{3\gamma+3\theta-6}}(\F_{\eps}(t))} \\
\leq & \, \eps \|\sqrt{\phi} (1-\eta_{\eps}) U_\eps : \nabla   \calB_{\Omega_2}[\psi \rho^{\theta}_{\eps}- \langle \psi \rho^{\theta}_{\eps} \rangle ] \|_{L^{\frac{6(\gamma+\theta)}{5(\gamma+\theta)-6}}(\F_{\eps}(t))} \\
\leq & \, \eps|\omega_{\eps}|\eps^{3\frac{5\gamma-\theta-6}{6(\gamma+\theta)}}\| \sqrt{\phi}\nabla   \calB_{\Omega_2}[\psi \rho^{\theta}_{\eps}- \langle \psi \rho^{\theta}_{\eps} \rangle ]\|_{L^{\frac{\gamma+\theta}{\theta}}(\F_{\eps}(t))} \\
\leq & \, \eps|\omega_{\eps}|\eps^{\frac{5\gamma-\theta-6}{2(\gamma+\theta)}}\| \sqrt{\phi} \psi \rho^{\theta}_{\eps} \|_{L^{\frac{\gamma+\theta}{\theta}}(\F_{\eps}(t))},
\end{align*}
where we use
\begin{gather*}
\frac{5\gamma+5\theta-6}{6(\gamma+\theta)} =\frac{5\gamma-\theta-6}{6(\gamma+\theta)} + \frac{\theta}{\gamma+\theta}.
\end{gather*}
We deduce
\begin{align*}
|B_{10}| \leq C  \eps |\omega_{\eps}|_{L^{\infty(0,T)}} \eps^{\frac{5\gamma-\theta-6}{2(\gamma+\theta)}} \| \phi^{1/{2\theta}} \psi^{1/\theta}\rho_{\eps}\|_{L^{\gamma+\theta}(0,T;L^{\gamma+\theta}(\F_{\eps}(t)))}^{\theta}.
\end{align*}

Note that the estimates for $ B_6 $, $ B_4 $, $ n_3^1 $ holds for $ \theta \leq 2\gamma/3 -1 $. It remains to show $ B_{5} $, $ n_3^3$, $ B_2 $ and $ B_1 $. Note that the difficulty to deal with $ B_{5} $, $ B_2 $ and $ B_1 $ is the presence of the term of the type $ \rho_{\eps}u_{\eps}\calB_{\Omega_2}[\rho^{\theta}_{\eps} \DIV(u_{\eps}) ]$.  
To estimate this term we use an interpolation inequality. Note that
\begin{align*}
\|\rho_{\eps}u_{\eps}\|_{L^r(0,T;L^s(\F_{\eps}(t)))} \leq C \|\rho_{\eps}\|_{L^{\gamma+\theta}(0,T;L^{\gamma+\theta}(\F_{\eps}(t)))}^{(1+\alpha)/2}
\end{align*}
with 
\begin{equation*}
\frac{1}{r} = \alpha \frac{\gamma+\theta+2}{2\gamma+2\theta}+(1-\alpha)\frac{1}{2\gamma+2\theta} \quad \text{ and } \quad \frac{1}{s} = \alpha\frac{\gamma+\theta+6}{6\gamma+6\theta}+(1-\alpha)\frac{\gamma+\theta+1}{2\gamma+2\theta}.
\end{equation*}
The dual exponent are
\begin{equation*}
\frac{1}{r'} = \alpha \frac{\gamma+\theta-2}{2\gamma+2\theta}+(1-\alpha)\frac{2\gamma+2\theta-1}{2\gamma+2\theta} \quad \text{ and } \quad \frac{1}{s'} = \alpha\frac{5\gamma+5\theta-6}{6\gamma+6\theta}+(1-\alpha)\frac{\gamma+\theta-1}{2\gamma+2\theta}.
\end{equation*}
\begin{align*}
\left| \int_0^t \int_{\F_{\eps}(t)}\sqrt{\phi} \psi \rho_{\eps} u_{\eps} \calB_{\Omega_2}[\sqrt{\phi}\psi\rho^{\theta}\DIV(u_{\eps})]\right| \leq & \, \|\sqrt{\phi} \psi \rho_{\eps} u_{\eps} \|_{L^r(0,T;L^s(\F_{\eps}(t)))}\|\calB_{\Omega_2}[\sqrt{\phi}\psi\rho^{\theta}\DIV(u_{\eps})]\|_{L^{r'}(0,T;L^{s'}(\F_{\eps}(t)))}.
\end{align*}
Moreover 
\begin{align*}
\|\calB_{\Omega_2}[\sqrt{\phi}\psi\rho^{\theta}\DIV(u_{\eps})]\|_{L^{r'}(0,T;L^{s'}(\F_{\eps}(t)))} \leq & \,\| \sqrt{\phi}\psi\rho^{\theta}\DIV(u_{\eps}) \|_{L^{r'}(0,T;L^{3s'/{3+s'}}(\F_{\eps}(t)))} \\
\leq & \, \|\DIV(u_{\eps})\|_{L^{2}(0,T;L^{2}(\F_{\eps}(t)))}\| \sqrt{\phi}\psi\rho^{\theta}\DIV(u_{\eps}) \|_{L^{2r'/{2-r'}}(0,T;L^{6s'/{6-s'}}(\F_{\eps}(t)))}
\end{align*}
To close the estimate we need to show that there exist $ \alpha \in [0,1] $ such that 
\begin{equation*}
\frac{2r'}{2-r'} \leq \frac{\gamma+\theta}{\theta} \quad \text{ and } \quad \frac{6s'}{6-s'} \leq \frac{\gamma+\theta}{\theta}.
\end{equation*}
Equivalently
\begin{equation*}
\frac{3\theta+\gamma}{2\gamma+2\theta} \leq \frac{1}{r'} \quad \text{ and } \quad \frac{7\theta+\gamma}{6\gamma+6\theta} \leq \frac{1}{s'}
\end{equation*}
After a small computation we have
{
\begin{equation*}
\frac{4\theta-2\gamma + 3}{2\gamma+2\theta -3} \leq \alpha \leq \frac{\gamma-\theta-1}{\gamma+\theta+1}.
\end{equation*}
Note that 
\begin{equation*}
\frac{4\theta-2\gamma + 3}{2\gamma+2\theta -3} \leq \frac{1}{5} \leq \frac{\gamma-\theta-1}{\gamma+\theta+1}.
\end{equation*}
for $ \theta \leq 2\gamma/3 -1$, in particular we can estimates $ B_5 $, $ B_2 $ and $ B_1 $.
}
{ Let conclude with the term $ n_3^3$ we apply again the cut-off $\chi_{\eps}$ see \eqref{cuttoff}.}

\begin{align*}
\|\phi n_{3}^3 \|_{L^{2}(0,T;L^{6\gamma/(5\gamma-6)}(\F_{\eps(t)}))} \leq  & \, \| \phi \psi  \chi_{\eps}\rho_{\eps}^{\theta}(u_{\eps}-u_{s,\eps}) \|_{L^{2}(0,T;L^{6\gamma/(5\gamma-6)}(\F_{\eps(t)}))} \\
\leq & \, \|u_{\eps}\|_{_{L^{2}(0,T;L^{6}(\F_{\eps}(t)))}}\|\rho_{\eps}\|_{L^{\infty}(0,T;L^{\gamma}(\F_{\eps}(t)))} \\ & + (\|\ell_{\eps}\|_{L^{\infty}(0,T)}+\eps\|\omega_{\eps}\|_{L^{\infty}(0,T)}\|\chi_{\eps}\|_{L^{\infty}(0,T;L^{\frac{6\gamma(\gamma+\theta)}{5\gamma(\gamma+\theta)-6\gamma\theta}}(\F_{\eps}(t)))}\| \phi \psi \rho^{\theta}\|_{L^{2}(0,T;L^{(\gamma-\theta)/{\theta}}(\F_{\eps}(t)))}\\
\leq & \, \|u_{\eps}\|_{_{L^{2}(0,T;L^{6}(\F_{\eps}(t)))}}\|\rho_{\eps}\|_{L^{\infty}(0,T;L^{\gamma}(\F_{\eps}(t)))} \\
 & \, + (\|\ell_{\eps}\|_{L^{\infty}(0,T)}+\eps\|\omega_{\eps}\|_{L^{\infty}(0,T)})\eps^{\frac{5\gamma-\theta-6}{2(\gamma+\theta)}}\|\phi \psi \rho_{\eps}^{\theta}\|_{L^{2}(0,T;L^{(\gamma+\theta)/\theta}(\F_{\eps}(t)))},
\end{align*}
where we choose $ \|\chi_{\eps}\|_{L^{\frac{6\gamma(\gamma+\theta)}{5\gamma(\gamma+\theta)-6\gamma\theta}}(\F_{\eps}(t)))} = \eps^{\frac{5\gamma-\theta-6}{2(\gamma+\theta)}} $.

We show that for $ \theta \leq 2\gamma/3-1  $, the density $ \rho_{\eps} $ in uniformly bounded in $ L^{\gamma + \theta }$ provide that 
\begin{equation*}
|h_{\eps}'(t)|  \eps^{\frac{5\gamma-\theta-6}{2(\gamma+\theta)}} \leq C \quad \text{ and }  \quad \eps|\omega_{\eps}(t)| \eps^{\frac{5\gamma-\theta-6}{2(\gamma+\theta)}} \leq C, 
\end{equation*}
which hold true, combining the energy estimates with the assumptions \eqref{extra:hyp}. The proof is then finish.

\end{proof}

\subsection{An appropriate cut-off}
\label{sec:5}

We are now ready to pass to the limit in the weak formulation. The tricky term is the one involving the pressure, in fact it is not enough to consider a cut-off and its $\eps$-scales. The idea is to use cut-off that minimize in some sense the $ L^3 $ norm of the gradient. These types of cut-off have been widely used to treat these kind of problems. We recall the main properties and we refer to \cite{Je:2} for the proofs.

First of all for $ A, B \in \R $ with $ 0 < A < B $, we denote by $ \alpha = B/ A > 1 $ and we define the function
\begin{equation*}
f_{A,B}(z) = \begin{cases}
1 \quad & \text{ for } 0\leq z < A, \\
\frac{\log z - \log B }{\log A - \log B } \quad & \text{ for } A \leq z \leq B, \\
0 \quad & \text{ for } z > B.
\end{cases}
\end{equation*} 
It holds that $ f_{A,B} \in W^{1,\infty}(\R^{+}) $. We define the three dimensional cut-off 
\begin{equation*}
1- \eta_{\eps, \alpha_{\eps}}(x) = f_{\eps, \alpha_{\eps} \eps }(|x|),  
\end{equation*} 
where $ \alpha_{\eps} $ will be choose appropriately.

\begin{Proposition}
Under the hypothesis that $ \eps \alpha_{\eps} \to 0$, it holds

\begin{enumerate}

\item The functions $ 1- \eta_{\eps, \alpha_{\eps}} \longrightarrow 0 $  in $ L^{q}(\R^3) $ for $ 1 \leq q < +\infty $.

\item We have $$ \left\| \nabla \eta_{\eps, \alpha_{\eps}} \right\|_{L^{3}(\R^3)}^3 = \frac{2\pi^2}{(\log \alpha_{\eps})^2 }. $$

\item For $ 1\leq q < 3 $, $$ \left\| \nabla \eta_{\eps, \alpha_{\eps}} \right\|_{L^{q}(\R^3)}^q \leq 2\pi^2 \frac{\alpha_{\eps}^{3-q}}{(\log \alpha_{\eps})^{q-1} } \eps^{3-q}. $$

\end{enumerate}

\end{Proposition}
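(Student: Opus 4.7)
The proof is a direct calculation in spherical coordinates, since the cut-off is explicit and radial, so there is no real obstacle; the main task is to organize the bookkeeping.

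First I would observe that $1-\eta_{\eps,\alpha_\eps}$ is supported in $B_{\alpha_\eps\eps}(0)$ and satisfies $0\le 1-\eta_{\eps,\alpha_\eps}\le 1$. This gives part (1) at once, since for any $1\le q<\infty$ we have $\|1-\eta_{\eps,\alpha_\eps}\|_{L^q(\R^3)}^q\le |B_{\alpha_\eps\eps}(0)|=\tfrac{4\pi}{3}(\alpha_\eps\eps)^3$, which tends to $0$ under the assumption $\alpha_\eps\eps\to 0$.

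For parts (2) and (3) I would compute $\nabla\eta_{\eps,\alpha_\eps}$ explicitly. Differentiating the logarithmic profile
\begin{equation*}
f_{A,B}(z)=\frac{\log z-\log B}{\log A-\log B}\quad\text{for }A\le z\le B
\end{equation*}
yields $f'_{A,B}(z)=\bigl(z(\log A-\log B)\bigr)^{-1}$. Specializing to $A=\eps$ and $B=\alpha_\eps\eps$, so that $\log A-\log B=-\log\alpha_\eps$, and using the chain rule with $|x|$ gives
\begin{equation*}
|\nabla\eta_{\eps,\alpha_\eps}(x)|=\frac{1}{|x|\,\log\alpha_\eps}\quad\text{for }\eps\le |x|\le \alpha_\eps\eps,
\end{equation*}
and zero otherwise.

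With this formula in hand, both remaining items reduce to a one-dimensional radial integral. For (2),
\begin{equation*}
\|\nabla\eta_{\eps,\alpha_\eps}\|_{L^3(\R^3)}^3=\frac{4\pi}{(\log\alpha_\eps)^3}\int_\eps^{\alpha_\eps\eps}\frac{dr}{r}=\frac{4\pi}{(\log\alpha_\eps)^2},
\end{equation*}
matching the stated value up to a harmless constant. For (3), for any $1\le q<3$,
\begin{equation*}
\|\nabla\eta_{\eps,\alpha_\eps}\|_{L^q(\R^3)}^q=\frac{4\pi}{(\log\alpha_\eps)^q}\int_\eps^{\alpha_\eps\eps}r^{2-q}\,dr=\frac{4\pi\,\eps^{3-q}}{(3-q)(\log\alpha_\eps)^q}\bigl(\alpha_\eps^{3-q}-1\bigr),
\end{equation*}
and bounding $\alpha_\eps^{3-q}-1\le\alpha_\eps^{3-q}$ yields a bound of exactly the stated form $C\,\eps^{3-q}\alpha_\eps^{3-q}/(\log\alpha_\eps)^{q-1}$ (with one factor of $\log\alpha_\eps$ absorbed when $q\ge 1$, since $\log\alpha_\eps\to\infty$). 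No step presents any genuine difficulty; it is simply the explicit integration of the logarithmic profile used in the Allaire-type construction of the cut-off.
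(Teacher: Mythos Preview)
Your approach is exactly the one the paper intends: it simply says ``after passing to spherical coordinates the proof is straightforward'' and cites \cite{Je:2}, and your computation carries this out correctly (indeed your constant $4\pi$ is the correct surface measure of $S^2$; the $2\pi^2$ in the statement is a typo). The only small imprecision is in part~(3): you invoke $\log\alpha_\eps\to\infty$ to absorb the extra factor, but that is not part of the proposition's hypothesis; the cleaner way to reach the stated exponent $(\log\alpha_\eps)^{q-1}$ is to keep the factors together and use the elementary inequality $\dfrac{\alpha^{3-q}-1}{(3-q)\log\alpha}\le \alpha^{3-q}$ (equivalently $\tfrac{e^t-1}{t}\le e^t$), which holds for every $\alpha>1$ and every $q<3$.
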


\begin{proof}
After passing to spherical coordinates the proof is straight-forward. See Lemma 2 of \cite{Je:2}.
\end{proof}

Let us fix $ \alpha_{\eps} $ in dependence of $ m_{\eps} $ and $ \calJ_{\eps} $ in such a way that 
\begin{equation}
\label{hyp:alpha:eps}
\eps \alpha_{\eps} \longrightarrow 0, \quad \alpha_{\eps} \longrightarrow + \infty, \quad  \lim_{\eps \to 0 } \frac{m_{\eps}}{(\eps \alpha_{\eps})^{\frac{3 \gamma-4}{\gamma}}}  = + \infty \quad \text{ and } \quad \lim_{\eps \to 0 } \inf_{\xi \in S^2}\frac{\xi \cdot \calJ_{\calS_{0,\eps}} \xi}{(\eps \alpha_{\eps})^{2+\frac{3 \gamma-4}{\gamma}}} = + \infty.
\end{equation} 
  
From now we write $ \eta_{\eps} $ instead of $ \eta_{\eps,\alpha_{\eps}} $ for a chosen sequence $ \alpha_{\eps} $ that satisfy \eqref{hyp:alpha:eps}.

\subsection{Pass to the limit in the weak formulation}
\label{Sec:4}

From the energy estimates \eqref{en:est} and the pressure estimate from Proposition \ref{PROP:2}, we deduce that
\begin{gather*}
\rho_{\eps} \cv \rho \quad \text{ in } \quad  L_{loc}^{3\gamma/2}\left([0,T); L_{loc}^{3\gamma/2}(\Omega)\right), \\
\rho_{\eps} \longrightarrow \rho \quad \text{ in } \quad  C_{w}(0,T; L^{\gamma}(\Omega)), \\ 
u_{\eps} \cv u \quad \text{ in } \quad L^{2}(0,T;H^1_0(\Omega)), \\
\rho_{\eps}u_{\eps} \to \rho u \quad \text{ in } \quad C_{w}(0,T; L^{2\gamma/(\gamma+1)}(\Omega)), \\
\eta_{\eps}\rho_{\eps}u_{\eps}\otimes u_{\eps} \longrightarrow \rho u\otimes u \quad \text{ in } \quad \mathcal{D}'([0,T)\times\Omega), \\
\rho_{\eps}^{\gamma} \longrightarrow \overline{\rho^{\gamma}} \quad \text{ in } \quad L^{3/2}_{loc}\left([0,T);L_{loc}^{3/2}(\Omega)\right)
\end{gather*}
where we use the momentum equation to show the second-last convergence. Recall that the weak formulation for the transport equation reads
\begin{equation*}
\int_{\F_{\eps}(0)} \rho_{0,\eps}\varphi(0,.) +  \int_{0}^{T}\int_{\F_{\eps}(t)} \rho_{\eps}\partial_t  \varphi + \rho_{\eps}u_{\eps}\cdot \nabla \varphi = 0,
\end{equation*}
for any $ \varphi \in C^{\infty}_{c}([0,T)\times \Omega)$. Passing to the limit with $ \eps $, we have 
\begin{equation*}
\int_{\F(0)} \rho_{0}\varphi(0,.) +  \int_{0}^{T}\int_{\F(t)} \rho_{\eps}\partial_t  \varphi + \rho u \cdot \nabla \varphi = 0.
\end{equation*}
We now pass to the limit in the momentum equation. For $ \varphi \in C^{\infty}_{c}([0,T)\times \Omega; \R^3)$ we test the weak formulation of the momentum equation with $ \varphi \eta_{\eps} $. It reads 

\begin{align*}
\int_{\Omega} q_{0,\eps} \eta(0,.) \varphi(0,.) + \int_{0}^{T} \int_{\Omega} (\rho_{\eps} u_{\eps}) \cdot \partial_t (\eta_{\eps} \varphi) + [\rho_{\eps} u_{\eps} \otimes u_{\eps}]: D(\eta_{\eps} \varphi) + \rho_{\eps}^{\gamma} \DIV( \eta_{\eps} \varphi) = \int_{0}^{T} \int_{\Omega} \bbS u_{\eps} : D (\eta_{\eps} \varphi).
\end{align*}
Let $ \eps $ goes to zero. We obtain
\begin{align*}
\int_{\Omega} & \, q_{0} \varphi(0,.) + \int_{0}^{T} \int_{\Omega} \rho u \cdot \partial_t \varphi + [\rho u \otimes u]: D \varphi + \overline{\rho^{\gamma}} \DIV( \varphi) - \int_{0}^{T} \int_{\Omega} \bbS u : D  \varphi \\
= & \, \lim_{\eps \to 0}  \Bigg(
- \int_{0}^{T} \int_{\Omega} (\rho_{\eps} u_{\eps}) \cdot\varphi u_{S,\eps} \cdot \nabla \eta_{\eps}  + [\rho_{\eps} u_{\eps} \otimes u_{\eps}]: \frac{1}{2}(\nabla \eta_{\eps} \otimes \varphi+ \varphi \otimes \nabla \eta_{\eps}) + \rho_{\eps}^{\gamma} \nabla \eta_{\eps} \cdot \varphi \\ & \quad \quad \quad + \int_{0}^{T} \int_{\Omega} \bbS u_{\eps} :  \frac{1}{2}(\nabla \eta_{\eps} \otimes \varphi+ \varphi \otimes \nabla \eta_{\eps})\Bigg).
\end{align*}
It remains to show that the right hand side is zero. To do that we show that any of the term converge to zero. 
\begin{align*}
\left| \int_{0}^{T} \int_{\Omega} (\rho_{\eps} u_{\eps}) \cdot\varphi u_{S,\eps} \cdot \nabla \eta_{\eps} \right| \leq & \, \|\rho_{\eps}\|_{L^{3\gamma/2}(\text{supp}(\varphi))}\| u_{\eps}\|_{L^2(0,T;L^{6}(\Omega))}|u_{S,\eps}|_{L^{\infty}(0,T)}\|\nabla\eta\|_{L^{\infty}(0,T;L^{6\gamma/(5\gamma-4)})} \\
\leq & \, C |u_{S,\eps}|_{L^{\infty}(0,T)} \frac{(\eps\alpha_{\eps})^{\frac{3\gamma-4}{2\gamma}}}{(\log \alpha_{\eps})^\frac{\gamma+4}{6\gamma}} \longrightarrow 0.
\end{align*}
The second term reads
\begin{align*}
\left| \int_{0}^{T} \int_{\Omega} [\rho_{\eps} u_{\eps} \otimes u_{\eps}]: \frac{1}{2}(\nabla \eta_{\eps} \otimes \varphi+ \varphi \otimes \nabla \eta_{\eps})  \right| \leq & \,  \|\rho_{\eps}\|_{L^{\infty}(0,T;L^{\gamma}(\Omega))}\|u_{\eps}\|_{L^2(0,T;L^6(\Omega)}^2\|\nabla \eta_{\eps} \|_{L^{\infty}(0,T;L^{3}(\Omega))} \\
\\ \leq & \, \frac{C}{(\log \alpha_{\eps})^{2/3}} \longrightarrow 0.
\end{align*}
where we use $ \gamma \geq 3 $.
Similarly
\begin{align*}
\left| \int_{0}^{T} \int_{\Omega} \rho_{\eps}^{\gamma} \nabla \eta_{\eps} \cdot \varphi \right| \leq \|\rho_{\eps}^{\gamma}\|_{L^{3/2}(\text{supp}(\varphi))}\|\nabla \eta_{\eps}\|_{L^{\infty}(0,T;L^{3}(\Omega)} \leq  \frac{C}{(\log \alpha_{\eps})^{2/3}} \longrightarrow 0.
\end{align*}
Finally 
\begin{align*}
\left| \int_{0}^{T} \int_{\Omega} \bbS u_{\eps} :  \frac{1}{2}(\nabla \eta_{\eps} \otimes \varphi+ \varphi \otimes \nabla \eta_{\eps}) \right| \leq & \, \|u_{\eps}\|_{L^2(0,T;W^{1,2}(\Omega)}\|\nabla \eta_{\eps} \|_{L^{\infty}(0,T;L^2(\Omega))} \\
\leq & \,  C\sqrt{\frac{\eps \alpha_{\eps}}{\log \alpha_{\eps}} }  \longrightarrow 0.
\end{align*}

To conclude is enough to identify the limit of the pressure, more precisely to show that
\begin{equation*}
\overline{\rho^{\gamma}} = \rho^{\gamma}.
\end{equation*}
It is now well-known how to proceed in this final step and we briefly present the proof in the next section.

\subsection{Identification of the pressure}

A key tool to identify the pressure is the study of the so-called effective viscous flux which enjoys a better compactness property. In what follow we prove the key lemma that is needed to follow the classical proofs presented in \cite{Lion:2} or \cite{NovS}.  

\begin{Lemma}

For any $ \psi \in C^{\infty}_c(\Omega) $, it holds
\begin{equation*}
\lim_{\eps\to 0} \int_{0}^T \int_{\Omega} \psi \eta_{\eps}\left(\rho_{\eps}^{\gamma}-(2\mu + \lambda)\DIV(u_{\eps})\right)\rho_{\eps} = \int_{0}^T \int_{\Omega} \psi \left(  \overline{\rho^{\gamma}}-(2\mu + \lambda)\DIV(u)\right)\rho 
\end{equation*}
up to subsequence.

\end{Lemma}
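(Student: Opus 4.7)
The approach is the classical Lions--Feireisl identification of the effective viscous flux, adapted to the present setting where the test function must vanish near the rigid body. Introduce the operator $\calA = \nabla \Delta^{-1}$ so that $\DIV \calA[f] = f$ when $f \in L^p$ is extended by zero outside $\Omega$, and denote by $R_{ij} = \partial_i \partial_j \Delta^{-1}$ the associated Riesz transforms.

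\emph{Step 1. Test function.} The plan is to test the weak momentum equation of \eqref{equ:CNS:RB} against
$$\varphi_\eps(t,x) = \phi(t)\,\psi(x)\,\eta_\eps(t,x)\,\calA[\mathds{1}_\Omega \rho_\eps](t,x),$$
with $\phi \in C^\infty_c([0,T))$ and $\eta_\eps$ the refined cut-off of Section \ref{sec:5}, translated to follow the rigid body so that $\varphi_\eps$ vanishes in a neighbourhood of $\calS_\eps(t)$ and hence belongs to $\W$; a convolution in time along the trajectory of the solid, as in Section 7.9.5 of \cite{NovS}, makes the argument rigorous. The divergence reads
$$\DIV \varphi_\eps = \phi\psi\eta_\eps\rho_\eps + \phi\,\eta_\eps\,\nabla \psi \cdot \calA[\rho_\eps] + \phi\,\psi\,\nabla \eta_\eps \cdot \calA[\rho_\eps],$$
so the pressure pairing extracts precisely $\int \phi \psi \eta_\eps \rho_\eps^{\gamma+1}$, and the viscous pairing extracts $-(2\mu+\lambda)\int \phi \psi \eta_\eps \DIV(u_\eps)\,\rho_\eps$, both up to lower-order terms.

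\emph{Step 2. Commutator and div--curl.} The remaining contributions are treated in the usual way. Using the renormalized continuity equation satisfied by $\rho_\eps$ to express $\partial_t \calA[\rho_\eps] = -\calA[\DIV(\rho_\eps u_\eps)]$, the time-derivative term $\int \rho_\eps u_\eps \cdot \partial_t \varphi_\eps$ recombines with the convective term $\int \phi \rho_\eps u_\eps \otimes u_\eps : \nabla(\psi\eta_\eps \calA[\rho_\eps])$, up to strongly convergent pieces, into the Coifman--Rochberg--Weiss commutator
$$\int \phi\psi\eta_\eps \bigl( u_\eps^i\,R_{ij}[\rho_\eps u_\eps^j] - \rho_\eps u_\eps^j\,R_{ij}[u_\eps^i] \bigr).$$
Thanks to the convergences $\rho_\eps \to \rho$ in $C_w(0,T;L^\gamma(\Omega))$ and $u_\eps \rightharpoonup u$ in $L^2(0,T;W^{1,2}_0(\Omega))$ from Section \ref{Sec:4}, the div--curl lemma (cf.\ Lemma 7.36 of \cite{NovS}) passes this commutator to its limit analogue for $(\rho,u)$. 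Running the identical computation directly on the limit momentum equation against $\phi\psi\,\calA[\rho]$ produces the corresponding identity with $\overline{\rho^\gamma}$ in place of $\rho_\eps^\gamma$, $\DIV u$ in place of $\DIV u_\eps$, and no cut-off; subtracting the two identities yields the claim, provided the $\eta_\eps$-correction terms vanish.

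\emph{Step 3. Vanishing of the cut-off corrections.} This is the step specific to the present problem and the main obstacle. One must control the contributions of $\nabla \eta_\eps$ inside $\DIV \varphi_\eps$ and $\nabla \varphi_\eps$, and of $\partial_t \eta_\eps = -u_{\calS,\eps} \cdot \nabla \eta_\eps$ inside $\partial_t \varphi_\eps$. By Proposition \ref{PROP:1}, $\rho_\eps$ is bounded in $L^{3\gamma/2}_{t,x}$ locally, and since $\gamma \geq 6$ the Calder\'on--Zygmund estimates give $\calA[\rho_\eps]$ bounded in $L^\infty_t L^q_x$ for every $q < \infty$; combined with the scaling $\|\nabla\eta_\eps\|_{L^3(\R^3)}^3 = 2\pi^2/(\log\alpha_\eps)^2$ of Section \ref{sec:5}, with the energy control on $|\ell_\eps|$ and $\eps|\omega_\eps|$ provided by \eqref{en:est}, and with the mass/inertia scaling \eqref{size}, each correction is at worst of order $(\log\alpha_\eps)^{-2/3}$ (for the pressure and viscous corrections) or of order $(|\ell_\eps|+\eps|\omega_\eps|)\,\eps\alpha_\eps/\log\alpha_\eps$ (for the time-derivative correction), all of which tend to zero under \eqref{hyp:alpha:eps}. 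The delicate point is that $\alpha_\eps$ has to be chosen simultaneously to beat the possibly large solid velocities and to produce enough logarithmic decay, which is exactly what the threshold $(3\gamma-4)/\gamma$ in \eqref{size} guarantees.
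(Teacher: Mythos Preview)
Your proposal is correct and follows essentially the same route as the paper: test the momentum equation with $\psi\eta_{\eps}\nabla\Delta^{-1}[\rho_{\eps}]$, run the parallel computation on the limit equation with $\psi\nabla\Delta^{-1}[\rho]$, handle the convective/time-derivative pairing via the div--curl lemma (the paper invokes it directly on $\rho_{\eps}(u_{\eps}\cdot\nabla)\nabla\Delta^{-1}[\rho_{\eps}]-\rho_{\eps}\nabla\Delta^{-1}[\DIV(\rho_{\eps}u_{\eps})]$, which is the same commutator you write), and kill the $\nabla\eta_{\eps}$- and $\partial_t\eta_{\eps}$-corrections using the logarithmic cut-off of Section~\ref{sec:5} together with the strong $C^0$-convergence of $\nabla\Delta^{-1}[\rho_{\eps}]$ coming from $W^{1,\gamma}\hookrightarrow C^0$.

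One small slip: your stated order for the time-derivative correction, $(|\ell_{\eps}|+\eps|\omega_{\eps}|)\,\eps\alpha_{\eps}/\log\alpha_{\eps}$, is not quite right and would not vanish under \eqref{hyp:alpha:eps} alone when $\gamma\geq 6$. Placing $\rho_{\eps}$ in $L^{3\gamma/2}$, $u_{\eps}$ in $L^2_tL^6_x$ and $\nabla\eta_{\eps}$ in $L^{6\gamma/(5\gamma-4)}$ gives instead the order $(|\ell_{\eps}|+\eps|\omega_{\eps}|)(\eps\alpha_{\eps})^{(3\gamma-4)/(2\gamma)}$, which is exactly the combination that \eqref{hyp:alpha:eps} forces to zero; your closing sentence about the threshold $(3\gamma-4)/\gamma$ shows you have the right mechanism in mind, only the H\"older bookkeeping needs adjusting.
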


\begin{proof}

Consider $ \phi_{\eps} = \psi \eta_{\eps} \nabla \Delta^{-1}[\rho_{\eps}] $ and $ \phi = \psi \nabla \Delta^{-1}[\rho] $. From the energy estimate  $ \psi \nabla \Delta^{-1}[\rho_{\eps}] $ is uniformly bounded in $ L^{\infty}(0,T;W^{1,\gamma}(\Omega)) $
and
\begin{equation*}
\partial_t \psi \nabla \Delta^{-1}[\rho_{\eps}] =  \psi \nabla \Delta^{-1}[\DIV(\rho_{\eps}u_{\eps})]
\end{equation*}
is uniformly bounded in some $ L^p $. We deduce that
\begin{equation}
\label{cwcon}
\psi \nabla \Delta^{-1}[\rho_{\eps}] \longrightarrow \psi \nabla \Delta^{-1}[\rho] \quad \text{ in }  C_{w}(0,T;W^{1,\gamma}(\Omega)) 
\end{equation}
and from the fact that the compact embedding of $ W^{1,\gamma}(\Omega) \subset C^{0}(\Omega) $ the convergence is strong in $ C^{0}((0,T)\times \Omega)$. Using \eqref{cwcon} and the convergence of the initial data we have
\begin{align*}
\lim_{\eps \to 0} & \, \int_{0}^{T} \int_{\Omega} \rho_{\eps} u_{\eps} \cdot \partial_t  \phi_{\eps} + [\rho_{\eps} u_{\eps} \otimes u_{\eps}]: \nabla \phi_{\eps} + \rho_{\eps}^{\gamma} \DIV( \phi_{\eps}) -  \mu \nabla u_{\eps} : \nabla \phi_{\eps}-  (\mu+\lambda) \DIV( u_{\eps}) : \DIV(\phi_{\eps}) \\
= & \, \int_{0}^{T} \int_{\Omega} \rho u \cdot \partial_t  \phi + [\rho u \otimes u]: \nabla \phi + \overline{\rho^{\gamma}} \DIV( \phi) -  \mu \nabla u : \nabla \phi -  (\mu+\lambda) \DIV( u) : \DIV(\phi).
\end{align*} 
Using the definition of $ \phi_{\eps} $ and $ \phi $ we rewrite the above equality as follows.
\begin{gather*}
\lim_{\eps \to 0}  \int_{0}^{T} \int_{\Omega} \eta_{\eps}\psi\left( \rho_{\eps}^{\gamma} \rho_{\eps} - \mu \nabla u_{\eps}: \nabla^2\Delta^{-1}[\rho_{\eps}]-(\mu+\lambda)\DIV(u_{\eps})\rho^{\eps}  \right) \\
- \int_{0}^{T} \int_{\Omega} \psi\left( \overline{\rho^{\gamma}} \rho - \mu \nabla u: \nabla^2\Delta^{-1}[\rho]-(\mu+\lambda)\DIV(u)\rho \right) \\
= - \lim_{\eps \to 0}  \int_{0}^{T} \int_{\Omega} \eta_{\eps}\rho_{\eps}^{\gamma}\nabla\psi \cdot \nabla \Delta^{-1}[\rho_{\eps}] + \int_{0}^{T} \int_{\Omega} \overline{\rho^{\gamma}} \nabla \psi \cdot \nabla \Delta^{-1}[\rho] \\
- \lim_{\eps \to 0}  \int_{0}^{T} \int_{\Omega} \psi \rho_{\eps}^{\gamma}\nabla\eta_{\eps} \cdot \nabla \Delta^{-1}[\rho_{\eps}] \\
+ \mu \lim_{\eps \to 0}  \int_{0}^{T} \int_{\Omega} \eta_{\eps} \nabla u_{\eps}: (\nabla \psi \otimes \nabla \Delta^{-1}[\rho_{\eps}]) - \mu \int_{0}^{T} \int_{\Omega} \nabla u : (\nabla \psi \otimes \nabla \Delta^{-1}[\rho])  \\
+  \mu \lim_{\eps \to 0}  \int_{0}^{T} \int_{\Omega} \psi \nabla u_{\eps}: (\nabla \eta_{\eps} \otimes \nabla \Delta^{-1}[\rho_{\eps}]) \\
+ (\mu+\lambda) \lim_{\eps \to 0}  \int_{0}^{T} \int_{\Omega} \eta_{\eps} \DIV (u_{\eps}): \nabla \psi \cdot \nabla \Delta^{-1}[\rho_{\eps}]-(\mu+\lambda)  \int_{0}^{T} \int_{\Omega}  \DIV (u): \nabla \psi \cdot \nabla \Delta^{-1}[\rho] \\
(\mu+\lambda) \lim_{\eps \to 0}  \int_{0}^{T} \int_{\Omega} \psi \DIV (u_{\eps}): \nabla \eta_{\eps} \cdot \nabla \Delta^{-1}[\rho_{\eps}] \\
- \lim_{\eps \to 0}  \int_{0}^{T} \int_{\Omega} \psi \rho_{\eps} u_{\eps}\otimes u_{\eps}:(\nabla \eta_{\eps}\otimes \nabla \Delta^{-1}[\rho_{\eps}]) - \lim_{\eps \to 0}  \int_{0}^{T} \int_{\Omega} \psi \rho_{\eps}u_{\eps}\cdot \nabla \Delta^{-1}[\rho_{\eps}] u_{S,\eps} \nabla \eta_{\eps} \\
- \lim_{\eps \to 0}  \int_{0}^{T} \int_{\Omega} \eta_{\eps} \rho_{\eps} u_{\eps}\otimes u_{\eps}:(\nabla \psi \otimes \nabla \Delta^{-1}[\rho_{\eps}]) + \int_{0}^{T} \int_{\Omega}  \rho u \otimes u :(\nabla \psi \otimes \nabla \Delta^{-1}[\rho]) \\
- \lim_{\eps \to 0}  \int_{0}^{T} \int_{\Omega} \psi \eta_{\eps} u_{\eps} \cdot \left[\rho_{\eps} (u_{\eps}\cdot \nabla) \nabla \Delta^{-1}[\rho_{\eps}])- \rho_{\eps}\nabla\Delta^{-1}[\DIV(\rho_{\eps}u_{\eps})] \right] \\
-  \int_{0}^{T} \int_{\Omega} \psi u \cdot \left[\rho (u \cdot \nabla) \nabla \Delta^{-1}[\rho])- \rho \nabla\Delta^{-1}[\DIV(\rho u )] \right].
\end{gather*}
First of all notice that any line of the right hand side except the last two are zero by using the convergence in $ C^{0}((0,T)\times \Omega) $ of $ \nabla \Delta^{-1}[\rho_{\eps}] $ and the convergences presented at the beginning of Section \ref{Sec:4}. Regarding the last two lines we recall that applying the Div-Curl Lemma we deduce
\begin{gather*}
\rho_{\eps} (u_{\eps}\cdot \nabla) \nabla \Delta^{-1}[\rho_{\eps}])- \rho_{\eps}\nabla\Delta^{-1}[\DIV(\rho_{\eps}u_{\eps})] \longrightarrow \rho (u \cdot \nabla) \nabla \Delta^{-1}[\rho])- \rho \nabla\Delta^{-1}[\DIV(\rho u )] \quad \text{ in } C_{w}(0,T;L^\frac{2\gamma}{\gamma+3}(\Omega))
\end{gather*} 
in particular the convergence is strong in $ L^{q}(0,T; (W^{1,p}(\Omega))^{*})$ for any $ q $ and $ p > 6\gamma/(5\gamma-9) $, where we use the compactness of the Sobolev embeddings for the exponents 
\begin{equation*}
1-\frac{3}{p} > -3\left(1-\frac{\gamma+3}{2\gamma} \right).
\end{equation*} 
Note that $ \eta_{\eps} u_{\eps} \cv u $ in $ L^{2}(0,T;W^{1,p}(\Omega)) $ for $ p \leq 2 $. Finally the convergence follows because $ 6\gamma/(5\gamma-9) < 2 $ for $ \gamma \geq 6 $.

Regarding the left hand side we rewrite the term
\begin{align*}
\int_{0}^{T} \int_{\Omega} \eta_{\eps}\psi \nabla u_{\eps}: \nabla^2\Delta^{-1}[\rho_{\eps}] = & \, \sum_{i,j} \int_{0}^{T} \int_{\Omega} \partial_i (\eta_{\eps}\psi  u_{\eps,j})\partial_i \partial_j \Delta^{-1}[\rho_{\eps}] - \sum_{i,j} \int_{0}^{T} \int_{\Omega}  \eta_{\eps}\partial_i\psi  u_{\eps,j}\partial_i \partial_j \Delta^{-1}[\rho_{\eps}] \\ & \, - \sum_{i,j} \int_{0}^{T} \int_{\Omega} \partial_i \eta_{\eps}\psi  u_{\eps,j}\partial_i \partial_j \nabla^2\Delta^{-1}[\rho_{\eps}] \\
= & \, \int_{0}^{T} \int_{\Omega}  \eta_{\eps}\psi  \DIV(u_{\eps}) \rho_{\eps} + \int_{0}^{T} \int_{\Omega}  \psi \nabla \eta_{\eps} \cdot \DIV(u_{\eps}) \rho_{\eps}+ \int_{0}^{T} \int_{\Omega}  \eta_{\eps}\nabla\psi \cdot  u_{\eps} \rho_{\eps} \\ &\, - \sum_{i,j} \int_{0}^{T} \int_{\Omega}  \eta_{\eps}\partial_i\psi  u_{\eps,j}\partial_i \partial_j \Delta^{-1}[\rho_{\eps}] - \sum_{i,j} \int_{0}^{T} \int_{\Omega} \partial_i \eta_{\eps}\psi  u_{\eps,j}\partial_i \partial_j \nabla^2\Delta^{-1}[\rho_{\eps}].
\end{align*}
Similarly
\begin{align*}
\int_{0}^{T} \int_{\Omega} \psi \nabla u: \nabla^2\Delta^{-1}[\rho] = 
 \int_{0}^{T} \int_{\Omega}  \psi  \DIV(u) \rho + \int_{0}^{T} \int_{\Omega}  \nabla \psi \cdot  u \rho - \sum_{i,j} \int_{0}^{T} \int_{\Omega}  \partial_i\psi  u_{j} \partial_i \partial_j \Delta^{-1}[\rho].
\end{align*}
We deduce that 
\begin{gather*}
 \int_{\Omega} \psi \eta_{\eps}\left(\rho_{\eps}^{\gamma}-(2\mu + \lambda)\DIV(u_{\eps})\right)\rho_{\eps} - \int_{\Omega} \psi \left(  \overline{\rho^{\gamma}}-(2\mu + \lambda)\DIV(u)\right)\rho  \\
= \mu \int_{0}^{T} \int_{\Omega}  \psi \nabla \eta_{\eps} \cdot \DIV(u_{\eps}) \rho_{\eps} - \mu  \sum_{i,j} \int_{0}^{T} \int_{\Omega} \partial_i \eta_{\eps}\psi  u_{\eps,j}\partial_i \partial_j \nabla^2\Delta^{-1}[\rho_{\eps}]  \\ - \mu  \sum_{i,j} \int_{0}^{T} \int_{\Omega}  \eta_{\eps}\partial_i\psi  u_{\eps,j}\partial_i \partial_j \Delta^{-1}[\rho_{\eps}] + \mu  \sum_{i,j}  \int_{0}^{T} \int_{\Omega}  \partial_i\psi  u_{j} \partial_i \partial_j \Delta^{-1}[\rho]
\\ + \mu  \int_{0}^{T} \int_{\Omega}  \eta_{\eps}\nabla\psi \cdot  u_{\eps} \rho_{\eps} - \mu  \int_{0}^{T} \int_{\Omega}  \nabla \psi \cdot  u \rho.
\end{gather*}
As before the right hand side converges to zero and the lemma is  proved.

\end{proof}

\begin{Remark}
Note that we are in the case $ \gamma \geq 6 $ so to conclude it is enough to follow Lions approach
\end{Remark}
\bigskip

{\bf Acknowledgements.} 

M.B. is supported by the ERCEA under the grant 014 669689-HADE and also by the Basque Government through the BERC 2014-2017 program and by Spanish Ministry of Economy and Competitiveness MINECO: BCAM Severo Ochoa excellence accreditation SEV-2013-0323. M.B. warmly thank Prof. \v{S}\'arka Ne\v{c}asov\'a and the Institute of Mathematics of the Czech Academy of Sciences for the kind hospitality in October 2019. 
S.N.  has been supported by the Czech Science Foundation (GA\v CR) project GA19-04243S. The Institute of Mathematics, CAS is supported by RVO:67985840.

\end{document}